\newtheorem{teo}{Theorem}[section]
\newtheorem{prop}{Proposition}[section]
\newtheorem{defi}{Definition}[section]
\newtheorem{exe}{Exemple}[section]
\newtheorem{obs}{Remark}[section]
\newcommand{\p}{\mathbb{P}_{\mathbb{C}}^n}
\newcommand{\pd}{\mathbb{P}_{\mathbb{C}}^2}
\newcommand{\D}{\mathbb{P}(\varpi_0,\varpi_1,\varpi_2)}
\newcommand{\F}{{\mathcal{F}}}
\begin{document}

\newtheorem{theorem}{Theorem}
\newtheorem{proposition}[theorem]{Proposition}
\newtheorem{lemma}[theorem]{Lemma}

\newtheorem{definition}{Definition}

\title{On algebraic hypersurfaces invariant by  weighted
projective foliations }
\author{
Maurício Corrêa JR\\
\small{Departamento de Matem\'atica}\\
\small{Universidade Federal de Minas Gerais}\\
\small{30123-970 Belo Horizonte - MG, Brasil}\\
\small{\texttt{mauriciojr@ufmg.br}}}
\date{\today}
\maketitle
\begin{abstract}
In this work we study some problems related with algebraic
hypersurfaces invariant by foliations on weighted projective spaces
$\mathbb{P}_{\mathbb{C}}(\varpi_0,\dots,\varpi_n)$ generalizing some
results known for $\p$, as for example: the number of singularities,
with multiplicities, contained in the invariant quasi-smooth
hypersurfaces ; Poincaré problem on weighted projective plane. Also,
we show that there exist numbers $M(d), N(k)\in \mathbb{N}$, such
that if a foliation $\F$ of degree $d$ has a hipersurface invariant
of degree $k>M(d)$, then is either $\F$ possesses a rational
integral first or the number of $\F$-invariants hypersurfaces of
degree $k$ is at most $N(k).$
\end{abstract}

\section{Introduction}

In  the end of the nineteenth century Darboux \cite{Da}, Poincaré
\cite{P}, Painlevé \cite{Pa} and Autonne \cite{Au} had given
beginning to the study of the problem of deciding whether a
holomorphic foliation $\F$ on $\pd$ is algebraically integrable,
i.e, if $\F$ admit a rational first integral. In \cite{Da} Darboux
showed that if a foliation on $\mathbb{C}^2$ has enough algebraic
solutions then it must have a first integral. Some years later
Jouanolou in \cite{J} improved this theory to obtain rational first
integrals for foliations on $\pd$. More precisely, He proved that if
$\F$  admit
$$\frac{d(d+1)}{2}+2$$ invariants algebraic curves then $\F$ admit a
rational first integral. Using the  same  arguments it is possible
to show that, in general for the weighted projective plane, we have:
if $\F$ is a foliation on $\mathbb{P}(\varpi_0,\varpi_1,\varpi_2)$
of degree $d$ which admits
$$dim_{\mathbb{C}}H^0(\mathbb{P}(\varpi_0,\varpi_1,\varpi_2),\mathscr{O}(d))+2$$
invariants algebraic curves, then $\F$ admit a rational first
integral.

The Darboux-Jouanolou theory of integrability provides a link
between the algebraic integrability foliations on $\p$ and the
number of invariant algebraic hypersurfaces that they have. In this
direction,  J. V. Pereira in \cite{Pe} approached this subject using
the concept of  \emph{extatic curve} of a foliation on $\pd$ with
respect to the a linear system. He got the following result:
\\
\\
\textbf{Theorem.}\cite{Pe}\  \emph{Let $\F$ be a foliation on
$\mathbb{P}_{\mathbb{C}}^2$, of degree $d\geq2$, that does not admit
rational first integral of degree $\leq k$. Then $\F$  has at most}
$$
{k+2\choose k}+\frac{(d-1)}{k} \cdot{{k+2\choose k}\choose 2}.
$$
\emph{invariant curves of degree $k$.}
\\

Using the extatic hypersurface we get the following result.

\begin{teo}\label{2}
Let $\F$ be a one-dimensional foliation on
$\mathbb{P}(\varpi_0,\dots,\varpi_n)$, of degree $d\geq 2$, and
$\mathscr{N}(k)$ the number of hypersurfaces of degree $k$
invariants by $\F$ . If $k>d-1$,  then is either $\F$ possesses a
rational integral first or
$$
\mathscr{N}(k)<
h^0(\mathbb{P}(\varpi),\mathscr{O}_{\mathbb{P}(\varpi)}(k))+
{h^0(\mathbb{P}(\varpi),\mathscr{O}_{\mathbb{P}(\varpi)}(k))\choose
2}
$$

\end{teo}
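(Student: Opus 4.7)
The plan is to adapt Pereira's extatic hypersurface technique to the weighted setting. Set $\ell = h^0(\mathbb{P}(\varpi), \mathscr{O}_{\mathbb{P}(\varpi)}(k))$ and fix a basis $f_1, \ldots, f_\ell$ of the space of quasi-homogeneous polynomials of weighted degree $k$. Let $X$ be a quasi-homogeneous polynomial vector field on $\mathbb{C}^{n+1}$ representing $\F$, normalized so that the derivation $X$ raises weighted degree by $d-1$. Define the \emph{extatic hypersurface} associated with the complete linear system $|\mathscr{O}_{\mathbb{P}(\varpi)}(k)|$ as the Wronskian-type determinant
$$
E_\F^{(k)} := \det\begin{pmatrix} f_1 & f_2 & \cdots & f_\ell \\ X(f_1) & X(f_2) & \cdots & X(f_\ell) \\ \vdots & \vdots & \ddots & \vdots \\ X^{\ell-1}(f_1) & X^{\ell-1}(f_2) & \cdots & X^{\ell-1}(f_\ell) \end{pmatrix}.
$$

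The first step is the divisibility lemma: if $f$ is a quasi-homogeneous polynomial of weighted degree $k$ cutting out an $\F$-invariant hypersurface, then $f \mid E_\F^{(k)}$. Indeed, $\F$-invariance means $X(f) = g\,f$ for some polynomial $g$, and since $X$ is a derivation, an easy induction gives $X^{j}(f) = h_j f$ for polynomials $h_j$. After an invertible column operation replacing one basis vector by $f$, the entire resulting column is divisible by $f$, hence so is the determinant. Since distinct irreducible invariant hypersurfaces are pairwise coprime, all of them divide $E_\F^{(k)}$ simultaneously.

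The second step is the degree count. Since $X^i(f_j)$ has weighted degree $k + i(d-1)$, the determinant $E_\F^{(k)}$ is quasi-homogeneous of weighted degree $\sum_{i=0}^{\ell-1}\bigl(k+i(d-1)\bigr) = \ell k + \binom{\ell}{2}(d-1)$. Hence, provided $E_\F^{(k)} \not\equiv 0$, the inequality $k \cdot \mathscr{N}(k) \leq \deg E_\F^{(k)}$ yields
$$
\mathscr{N}(k) \leq \ell + \binom{\ell}{2}\frac{d-1}{k} < \ell + \binom{\ell}{2},
$$
where the strict inequality uses the hypothesis $k > d-1$. This is exactly the claimed bound.

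Finally one must dispose of the vanishing case. If $E_\F^{(k)} \equiv 0$, a classical Wronskian-type argument produces a nontrivial relation $\sum c_i f_i = 0$ with coefficients $c_i$ that are $X$-invariant rational functions, not all constant; taking ratios of such coefficients gives an $X$-invariant rational function of weighted degree zero, that is, a rational first integral of $\F$ on $\mathbb{P}(\varpi)$. The main obstacle I anticipate is the degree bookkeeping in the weighted setting: one has to verify that a representative vector field of $\F$ can genuinely be chosen whose derivation shifts weighted degree by exactly $d-1$ independently of the weights $\varpi_i$, and that in the vanishing case the resulting $X$-invariant meromorphic function really descends to $\mathbb{P}(\varpi)$ rather than living only on the affine cone $\mathbb{C}^{n+1}$.
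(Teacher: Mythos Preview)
Your proposal is correct and follows essentially the same route as the paper: the paper also forms the extatic determinant with respect to $V=H^0(\mathbb{P}(\varpi),\mathscr{O}(k))$, computes its weighted degree as $\binom{\ell}{2}(d-1)+\ell k$ via the same check that $\deg X^{j}(v_r)=j(d-1)+k$, and derives the strict bound from $k>d-1$, while invoking the extatic proposition (your divisibility plus vanishing--implies--first--integral steps, imported from \cite{C-L-P}) in lieu of your explicit arguments. Your two anticipated obstacles are non-issues in the paper's setup: the degree shift $d-1$ is exactly the computation $\deg(P_i)+\deg(\partial v_r/\partial z_i)=(d+\varpi_i-1)+(k-\varpi_i)$, and the first-integral in the vanishing case is the content of Proposition~\ref{ext}.
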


This  show that there exist numbers $M(d), N(k)\in \mathbb{N}$, such
that if a foliation $\F$ of degree $d$ has a hipersurface invariant
of degree $k>M(d)$, then is either $\F$ possesses a rational
integral first or the number of $\F$-invariants hypersurfaces of
degree $k$ is at most $N(k).$

Henri Poincaré studied in \cite{P} the problem which, in the modern
terminology, says: \emph{"Is it possible to decide if a holomorphic
foliation $\F$ on the complex projective plane
$\mathbb{P}_{\mathbb{C}}^2$ admits a rational first integral ?"}
Poincaré observed that in order to solve this problem is sufficient
to find a bound for the degree of the generic leaf of $\F$. In
general, this is not possible, but doing some hypothesis we obtain
an affirmative answer for this problem, which nowadays is known as
$\emph{Poincaré Problem}$. This problem was treated by Cerveau and
Lins Neto \cite{CN} and by Carnicer \cite{Ca} and the answer is
provided the foliation or the curve are subjected to some
conditions. Simple examples show that, when $S$ is a dicritical
separatrix of $\F$, the search for a positive solution to the
problem is meaningless. The obstruction in this case was given by M.
Brunella which in \cite{B1} observed that obstruction  to the
positive solution to Poincaré problem is given by the GSV index.
Meanwhile  A. Lins Neto constructed, in \cite{N} some remarkable
families of foliations on $\pd$ providing counterexamples for this
problem. We summarize the results obtained in \cite{CN}, \cite{Ca}
and \cite{MS} in the following theorem.

\begin{teo}
Let $\F$ a foliation on  $\pd$ and $S$ a  separatrix. We have that:
\begin{itemize}
  \item [i)]$deg(S)\leq
deg(\F)+2$,  if $S$ is a non-dicritical or

  \item [ii)]if $S$ is smooth them $deg(S)\leq
deg(\F)+1.$

\end{itemize}
\end{teo}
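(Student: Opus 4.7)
My plan is to treat items (i) and (ii) separately, in both cases using local index invariants of the pair $(\F,S)$ at the singularities lying on $S$ together with a global Baum-Bott type identity obtained by summing them.

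For part (i), I would follow Carnicer's strategy. Represent $\F$ by a polynomial $1$-form $\omega$ of degree $\deg(\F)+1$ on $\mathbb{C}^{3}$ satisfying $i_R\omega=0$ for the radial vector field $R$, and write $S=\{f=0\}$ with $f$ reduced homogeneous of degree $k=\deg(S)$. Invariance is recorded by the equation $df\wedge\omega=f\,\Theta$. The restriction of $\F$ to $S$ and the Baum-Bott/GSV formula in $\pd$ yield the global identity
\[
\sum_{p\in\mathrm{Sing}(\F)\cap S}\mathrm{GSV}_{p}(\F,S)\;=\;k\bigl(\deg(\F)+2-k\bigr).
\]
The key point is then to show $\mathrm{GSV}_{p}(\F,S)\geq 0$ at every $p$. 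This I would do by resolving the singularities of $S$ through a finite composition of blowups $\pi:\widetilde{X}\to\pd$: at each infinitely-near point of $\pi^{-1}(S)$, the non-dicritical hypothesis forces every component of the exceptional divisor to be invariant by $\pi^{\ast}\F$, and a local computation (Carnicer's estimate) shows that under this invariance the corresponding local GSV contribution is non-negative. Summing gives $k(\deg(\F)+2-k)\geq 0$, hence $k\leq\deg(\F)+2$.

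For part (ii), when $S$ is smooth there are no singular points of $S$ to blow up, and the GSV-index at a point of $\mathrm{Sing}(\F)\cap S$ reduces to the usual Poincar\'e-Hopf index of the vector field induced by $\F$ on the smooth curve $S$. The corresponding global identity becomes
\[
\sum_{p\in\mathrm{Sing}(\F)\cap S}\mathrm{ind}_{p}(\F,S)\;=\;k\bigl(\deg(\F)+1-k\bigr),
\]
and each summand on the left is a non-negative integer — it is the local vanishing order of $\omega|_{S}$ at $p$. Non-negativity of the right-hand side yields $k\leq\deg(\F)+1$.

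The main obstacle is the non-negativity statement needed in (i): it requires a careful bookkeeping through the resolution of $S$, keeping track of the Camacho-Sad-type contributions along each exceptional divisor. It is precisely here that the non-dicritical hypothesis is indispensable, since an exceptional component that is not invariant (a dicritical one) could contribute negatively and destroy the bound. Once this local estimate is in hand, both parts reduce to the same global identity, with (ii) being essentially formal because smoothness eliminates the resolution step entirely.
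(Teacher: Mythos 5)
First, note that the paper itself does not prove this theorem: it is stated explicitly as a summary of results from \cite{CN}, \cite{Ca} and \cite{MS}, and the only argument in the paper that bears on it is the proof of its weighted generalization (Theorem \ref{3}). Measured against that argument, your part (i) is essentially the same route and is sound in outline: Brunella's inequality $N_{\F}\cdot S-S\cdot S=\sum_{p}\mathrm{GSV}_{p}(\F,S)\geq 0$ for a non-dicritical separatrix, together with $N_{\F}=\mathscr{O}_{\pd}(d+2)$, gives $k(d+2-k)\geq 0$ and hence $k\leq d+2$. (Two small imprecisions: the local non-negativity of the GSV index is established for the union of \emph{all} separatrices at each non-dicritical singular point, and the resolution one runs through is a reduction of the singularities of the \emph{foliation}, not merely of the curve $S$; but this is exactly the Carnicer--Brunella lemma and you correctly flag it as the crux.)

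Part (ii) contains a genuine error in the global identity, and the argument as written does not reach the claimed bound. For a smooth invariant curve $S$ of degree $k$, the sum of the Poincar\'e--Hopf indices of the vector field induced by $\F$ on $S$ is
\[
\sum_{p\in \mathrm{Sing}(\F)\cap S}Z(\F,S,p)=\chi(S)-T_{\F}\cdot S=(3k-k^{2})-(1-d)k=k(d+2-k),
\]
not $k(d+1-k)$; one gets the same value $k(d+2-k)$ by summing Milnor numbers (Soares's formula, cf. the example after Theorem \ref{number} with $n=2$) or by computing the vanishing of $\omega|_{S}$ as a section of $N_{S}^{*}\otimes N_{\F}|_{S}$. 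Mere non-negativity of this quantity therefore only recovers $k\leq d+2$, the same bound as in (i). The missing ingredient --- and it is precisely how the paper proves part (ii) of Theorem \ref{3} --- is \emph{strict} positivity: by the Camacho--Sad index theorem, $\sum_{p\in \mathrm{Sing}(\F)\cap S}CS(\F,S,p)=S\cdot S=k^{2}>0$, so $\mathrm{Sing}(\F)\cap S\neq\emptyset$ and the sum of the (non-negative integer) local indices is at least $1$. Then $k(d+2-k)\geq 1$ forces $k<d+2$, i.e. $k\leq d+1$. You should correct the identity and add this Camacho--Sad step; without it part (ii) is not proved.
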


We obtain the following result:

\begin{teo}\label{3}(Poincaré problem)
Let $\F$ be a foliation on  $\D$ such that $Sing(\F)\cap
Sing(\D)=\emptyset$ and $S$ a separatrix. We have that:
\begin{itemize}
  \item [i)] if $S$ is a non-dicritical separatrix, them $deg(S)\leq
deg(\F)+\varpi_0+\varpi_1+\varpi_2 -1.$

  \item [ii)]if $S$ is quasi-smooth them $deg(S)\leq
deg(\F)+\varpi_0+\varpi_1+\varpi_2 -2.$

\end{itemize}
\end{teo}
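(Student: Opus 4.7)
The plan is to port the two classical arguments, Cerveau--Lins Neto for (i) and Carnicer for (ii), from $\pd$ to the orbifold setting of $\D$ by systematically replacing the standard intersection theory with the $\mathbb{Q}$-valued intersection theory on the weighted projective plane. The hypothesis $\mathrm{Sing}(\F)\cap \mathrm{Sing}(\D)=\emptyset$ will be decisive: it ensures that every singularity of $\F$ lying on $S$ sits in the smooth locus of $\D$, so that the local indices (GSV, Camacho--Sad) are defined in the classical analytic sense and retain their usual local properties, most importantly Brunella's non-negativity of the GSV index at non-dicritical singularities.

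For part (i) I would first record the relevant orbifold invariants. Since $K_{\D}\cong\A(-(\varpi_0+\varpi_1+\varpi_2))$, a foliation of degree $d$ has normal sheaf $N_{\F}\cong\A(d+\varpi_0+\varpi_1+\varpi_2-1)$; a separatrix $S$ of degree $k$ then satisfies
\[
S\cdot S=\frac{k^{2}}{\varpi_0\varpi_1\varpi_2},\qquad \deg(N_{\F}|_{S})=\frac{k(d+\varpi_0+\varpi_1+\varpi_2-1)}{\varpi_0\varpi_1\varpi_2}
\]
in the orbifold Chow ring of $\D$. Next I would apply the G\'omez-Mont identity
\[
\sum_{p\in\mathrm{Sing}(\F)\cap S}\mathrm{GSV}(\F,S,p)=\deg(N_{\F}|_{S})-S\cdot S,
\]
legal here because every contributing point lies in the smooth locus of $\D$. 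Brunella's theorem makes the left-hand side non-negative once $S$ is non-dicritical, and clearing the positive denominator yields $k\leq d+\varpi_0+\varpi_1+\varpi_2-1$.

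For part (ii), quasi-smoothness of $S$ combined with the disjointness hypothesis forces $S$ to avoid the orbifold points of $\D$, so $S$ is smooth as an abstract curve. This lets me invoke the Camacho--Sad formula $\sum_{p}\mathrm{CS}(\F,S,p)=S\cdot S$ and apply Carnicer's local analysis: at a singularity of $\F$ lying on the smooth separatrix, the Camacho--Sad and GSV contributions can be compared so that one extra unit is gained on the right-hand side of the previous inequality. Feeding this refinement back through the argument of (i) should improve the bound to $k\leq d+\varpi_0+\varpi_1+\varpi_2-2$.

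The hard part, I expect, is not the local analysis --- which transfers unchanged from $\pd$ precisely because the disjointness hypothesis keeps everything in the smooth locus --- but the careful orbifold Chern-class bookkeeping needed to justify the G\'omez-Mont and Camacho--Sad identities above on $\D$ with the correct rational coefficients, and in particular to identify $N_{\F}$ with $\A(d+\varpi_0+\varpi_1+\varpi_2-1)$ under the adopted notion of degree. Once these global formulas are in place, both inequalities will follow by direct translation of the classical arguments for $\pd$.
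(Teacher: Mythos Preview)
Your treatment of part (i) coincides with the paper's: identify $N_{\F}=\mathscr{O}_{\mathbb{P}(\varpi)}(d+\varpi_0+\varpi_1+\varpi_2-1)$ from $K_{\mathbb{P}(\varpi)}=K_{\F}\otimes N_{\F}^{*}$, apply Brunella's inequality $N_{\F}\cdot S\ge S\cdot S$ (equivalently, non-negativity of the GSV sum) in the $\mathbb{Q}$-intersection theory, and clear the denominator $\varpi_0\varpi_1\varpi_2$.

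For part (ii) your route diverges from the paper's. The paper does not run a Carnicer-style local index comparison; instead it invokes its own Theorem~\ref{number} (the weighted Soares formula), which for $n=2$ reads
\[
\sum_{p\in \mathrm{Sing}(\F)\cap S}\mu_p^{orb}(\F)=\frac{\deg(S)\bigl(\deg(\F)+\varpi_0+\varpi_1+\varpi_2-1-\deg(S)\bigr)}{\varpi_0\varpi_1\varpi_2},
\]
and then uses Camacho--Sad only to show $S\cdot S>0$, hence $\mathrm{Sing}(\F)\cap S\neq\emptyset$, hence the left side is strictly positive, which forces $\deg(S)<\deg(\F)+|\varpi|-1$. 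Two remarks on your version: first, the classical smooth-separatrix bound on $\mathbb{P}^2$ is Soares's result, not Carnicer's (Carnicer handles the non-dicritical case, which is your part (i)); second, your claim that quasi-smoothness plus the disjointness hypothesis forces $S$ to miss $\mathrm{Sing}(\D)$ is not correct as stated (e.g.\ $\{z_1=0\}$ is quasi-smooth and passes through two orbifold points), though $S$ is still smooth as an abstract curve. Your underlying idea can be salvaged --- on a smooth separatrix each singular point of $\F$ contributes $\mathrm{GSV}\ge 1$, so the same Camacho--Sad step yields $N_{\F}\cdot S-S\cdot S>0$ and hence the desired strict inequality --- but the proposal as written is vague about which local inequality is actually being invoked. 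The paper's approach has the structural advantage of deriving (ii) as a corollary of Theorem~\ref{number}, linking the Poincar\'e bound to the singularity-count formula rather than to a separate local estimate.
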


A hypersurface $\mathscr{V}$ on $\mathbb{P}(\varpi)$ is said
quasi-smooth if the cone $\pi^{-1}(\mathscr{V})$ is smooth on
$\mathbb{C}^{n+1}\backslash\{0\}$. We calculate the number of
singularities of a foliation, with multiplicities, contained in the
invariant quasi-smooth hypersurfaces on
$\mathbb{P}(\varpi_0,\dots,\varpi_n)$, generalizing the M. Soares's
result \cite{MS}.

\begin{teo}\label{number}
Let $\F$ be a foliation of degree $d$ on
$\mathbb{P}(\varpi_0,\dots,\varpi_n)$ with isolated singularities.
If $\mathscr{V}$  is a   quasi-smooth hypersurface on
$\mathbb{P}(\varpi_0,\dots,\varpi_n)$ invariant by a foliation $\F$,
then
$$
(\varpi_0\cdots\varpi_n)\cdot\sum_{p\in Sin(\F)\cap
\mathscr{V}}\mu_p^{orb}(\F)=\displaystyle\sum_{i=0}^{n-1}\left[\sum_{k=0}^{i}(-1)^k\sigma_{i-k}
(\varpi_0,\dots,\varpi_n)deg(\mathscr{V})^{k+1}\right](d-1)^{n-1-i},
$$
where  and $\sigma_j$ is the $j$-th elementary symmetric  function.

\end{teo}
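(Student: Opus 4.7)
The plan is to interpret $\F$ as a twisted vector field on $\mathbb{P}(\varpi)$, restrict it to the invariant hypersurface $\mathscr{V}$, and count its zeros via a top-Chern-class computation in the orbifold Chow ring. A one-dimensional foliation of degree $d$ on $\mathbb{P}(\varpi)$ is given by a global section $s$ of $T\mathbb{P}(\varpi)\otimes\mathscr{O}(d-1)$ whose isolated zero scheme is $\mathrm{Sing}(\F)$, weighted by the orbifold lengths $\mu_p^{orb}(\F)$. Because $\mathscr{V}$ is invariant and quasi-smooth, the adjunction sequence
$$
0 \longrightarrow T\mathscr{V} \longrightarrow T\mathbb{P}(\varpi)|_{\mathscr{V}} \longrightarrow \mathscr{O}(\deg\mathscr{V})|_{\mathscr{V}} \longrightarrow 0
$$
is exact on the underlying orbifold, and $s|_{\mathscr{V}}$ factors through a section $\widetilde{s}$ of $T\mathscr{V}\otimes\mathscr{O}(d-1)|_{\mathscr{V}}$ whose zero scheme is precisely $\mathrm{Sing}(\F)\cap\mathscr{V}$.

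The Poincar\'e--Hopf formula on the quasi-smooth orbifold $\mathscr{V}$ then gives
$$
\sum_{p\in\mathrm{Sing}(\F)\cap\mathscr{V}}\mu_p^{orb}(\F) \;=\; \int_{\mathscr{V}} c_{n-1}\bigl(T\mathscr{V}\otimes\mathscr{O}(d-1)\bigr).
$$
Since $T\mathscr{V}$ has rank $n-1$, the tensoring identity specializes to $c_{n-1}(T\mathscr{V}\otimes\mathscr{O}(d-1))=\sum_{j=0}^{n-1} c_j(T\mathscr{V})\,(d-1)^{n-1-j} h^{n-1-j}$ with $h=c_1(\mathscr{O}(1))$. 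Combining the weighted Euler sequence $0\to\mathscr{O}\to\bigoplus_i\mathscr{O}(\varpi_i)\to T\mathbb{P}(\varpi)\to 0$ with the adjunction sequence above yields the formal identity
$$
c(T\mathscr{V}) \;=\; \frac{\prod_{i=0}^{n}(1+\varpi_i h)}{1+eh}\bigg|_{\mathscr{V}}, \qquad e:=\deg\mathscr{V}.
$$
Extracting the coefficient of $h^j$ from this generating function produces $c_j(T\mathscr{V})=a_j\, h^j|_{\mathscr{V}}$ with $a_j=\sum_{k=0}^{j}(-1)^k \sigma_{j-k}(\varpi)\,e^k$. Substituting these values into the Chern-class expression and integrating by means of the orbifold Bezout identity $\int_{\mathscr{V}}h^{n-1}=e/(\varpi_0\cdots\varpi_n)$ delivers the claimed equality once both sides are multiplied by $\varpi_0\cdots\varpi_n$, after relabelling the summation indices.

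The main obstacle I expect is making the underlying orbifold intersection theory rigorous on the quasi-smooth stack $\mathscr{V}\subset\mathbb{P}(\varpi)$: one needs the Euler and adjunction sequences to remain exact as sequences of locally free sheaves on the Deligne--Mumford stack, a Poincar\'e--Hopf statement for sections of orbifold bundles that is compatible with the very definition of $\mu_p^{orb}(\F)$ (in particular when $\mathrm{Sing}(\F)$ meets $\mathrm{Sing}(\mathbb{P}(\varpi))$), and the orbifold degree computation $\int_{\mathscr{V}}h^{n-1}=e/(\varpi_0\cdots\varpi_n)$. Once these foundational ingredients are in place, the resulting formula is a direct algebraic expansion of the top Chern class.
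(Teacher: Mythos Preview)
Your proposal is correct and follows essentially the same route as the paper: restrict the section of $T\mathbb{P}(\varpi)\otimes\mathscr{O}(d-1)$ to the invariant quasi-smooth hypersurface via the adjunction sequence, apply an orbifold Poincar\'e--Hopf/top Chern class argument to obtain $\sum_{p}\mu_p^{orb}(\F)=\int_{\mathscr{V}}^{orb}c_{n-1}(T\mathscr{V}\otimes\mathscr{O}(d-1)|_{\mathscr{V}})$, compute $c_i(T\mathscr{V})$ from the Euler and normal bundle sequences, and evaluate with $\int_{\mathscr{V}}^{orb}h^{n-1}=\deg(\mathscr{V})/(\varpi_0\cdots\varpi_n)$. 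The paper carries out exactly these steps (phrasing the Chern class recursion inductively rather than via the generating function $\prod_i(1+\varpi_i h)/(1+eh)$), and treats the foundational points you flag by invoking Fulton's intersection theory and the orbifold integral of Proposition~\ref{prop1}.
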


\begin{exe}
Let $\mathbb{P}(1,\dots,1)=\mathbb{P}^n$ be and $\mathscr{V}$ a
algebraic smooth hypersurface. Since  $\sigma_{i-k}
(1,\dots,1)={n+1\choose i-k}$, we have that

$$
\begin{array}{ccl}
  \sum_{p\in Sin(\F)\cap
\mathscr{V}}\mu_p(\F) & = &
\displaystyle\sum_{i=0}^{n-1}\left[\sum_{k=0}^{i}(-1)^k {n+1\choose
i-k}
deg(\mathscr{V})^{k+1}\right](d-1)^{n-1-i} \\
\\
   & = & \displaystyle\sum_{i=0}^{n-1}\left[
1+(-1)^i(deg(\mathscr{V})-1)^{i+1}\right]d^{n-1-i}
\end{array}
$$
This is the M. Soares's result \cite{MS}. If $deg(\mathscr{V})=1$,
it follows that
$$
\sum_{p\in Sin(\F)}\mu_p(\F)=d^{n-1}+\cdots +d+1.
$$
\end{exe}

\section{Weighted projective space $\mathbb{P}(\varpi_0,\dots,\varpi_n)$}

Let $\varpi_0,\dots,\varpi_n$ be integers $\geq 1$ and pairwise
prime. Consider the $\mathbb{C}^*$-action on
$\mathbb{C}^{n+1}\backslash \{0\}$ given by
$$
\lambda\cdot(z_0,\dots,z_n)=(\lambda^{\varpi_0}
z_0,\dots,\lambda^{\varpi_n} z_n),
$$
where $\lambda\in \mathbb{C}^*$ and $(z_0,\dots,z_n)\in
\mathbb{C}^{n+1}\backslash \{0\}$. The quotient space $
\mathbb{P}(\varpi_0,\dots,\varpi_n)=(\mathbb{C}^{n+1}\backslash
\{0\}/\sim) $, induced for the action above, is called
\emph{weighted projective  space} of type
$(\varpi_0,\dots,\varpi_n)$. Some times we will use the notation
$\mathbb{P}(\varpi_0,\dots,\varpi_n):=\mathbb{P}(\varpi)$.

\subsection{Orbifold structures of $\mathbb{P}(\varpi_0,\dots,\varpi_n)$}

Consider the opened $\mathcal{U}_i=\{[z_0:\dots:z_n]\in
\mathbb{P}(\varpi_0,\dots,\varpi_n);\ z_i\neq0 \}\subset
\mathbb{P}(\varpi_0,\dots,\varpi_n)$, with $i=0,1,\dots,n.$ Let
$\mu_{\varpi_i}\subset \mathbb{C}^*$ the subgroup of $\varpi_i$-th
roots of unity. We can define the homeomorphisms
$\phi_i:\mathcal{U}_i\longrightarrow \mathbb{C}^{n}/\mu_{\varpi_i}$,
given by
$$
\phi_i([z_0:\dots:z_n])=\left(\frac{z_0}{z_i^{\varpi_0/\varpi_i,}},\dots,\frac{\widehat{z_i}}{z_i},\dots,\frac{z_n}{z_i^{\varpi_n/\varpi_i,}}
\right)_{\varpi_i}
$$
where the symbol $"\wedge"$ means omission and $(\cdot)_{\varpi_i}$
is a $\varpi_i$-conjugacy class in $\mathbb{C}^{n}/\mu_{\varpi_i}$
with $\mu_{\varpi_i}$ acting on $\mathbb{C}^{n}$ by
$$
\lambda\cdot(z_0,\dots,z_n)=(\lambda^{\varpi_0} z_0,\dots
\widehat{z_1},\dots,\lambda^{\varpi_2} z_2), \lambda \in
\mu_{\varpi_i}.
$$
Moreover, on $\phi_i(\mathcal{U}_i\cap\mathcal{U}_j)\subset
\mathbb{C}^{n}/\mu_{\varpi_i}$ we have the transitions maps
$$
\phi_i\circ
\phi_j^{-1}((z_1,\dots,z_n)_{\varpi_i})=\left(\frac{z_0}{z_j^{\varpi_0/\varpi_j,}},\dots,\frac{\widehat{z_j}}{z_j},
\dots,\frac{1}{z_j^{\varpi_i/\varpi_j}},\dots,\frac{z_n}{z_j^{\varpi_n/\varpi_j}}
\right)_{\varpi_j}
$$
We conclude that $\{\phi_i,\mathcal{U}_i\}_{i=0}^{n}$ is a
holomorphic atlas orbifold for
$\mathbb{P}(\varpi_0,\dots,\varpi_n)$. Moreover, we have that
$\{\mathbb{C}^n,\mu_{\varpi_i},\pi\circ\phi_i \}_{i=0}^{n}$ is an
n-dimensional uniformizing system of
$\mathbb{P}(\varpi_0,\dots,\varpi_n)$.

Since $\varpi_0,\dots,\varpi_n$ are pairwise prime, the singular set
of $\mathbb{P}(\varpi_0,\dots,\varpi_n)$ are the following  $n+1$
points
$$
[1,0,\dots,0],[0,1,\dots,0],\dots,[0,\dots,0,1].
$$

There exist other orbifold structure for
$\mathbb{P}(\varpi_0,\dots,\varpi_n)$. This one, is induced by the
following action of the group
$(\mu_{\varpi_0}\times\cdots\times\mu_{\varpi_n})$ on $\mathbb{P}^n$
given by
$$
\begin{array}{ccc}
    (\mu_{\varpi_0}\times\cdots\times\mu_{\varpi_n})\times \mathbb{P}^n &\longrightarrow&   \mathbb{P}^n \\
    \\
    ((\lambda_0,\dots,\lambda),[z_0,\dots,z_n]) & \longmapsto
    &[\lambda_0z_0,\dots,\lambda_nz_n].
  \end{array}
$$
Now consider the map $\varphi:\mathbb{P}^n\longrightarrow
\mathbb{P}(\varpi_0,\dots,\varpi_n)$  defined by $\varphi([z_0,\dots
,z_n])=[z_0^{\varpi_0},\dots ,z_n^{\varpi_n}]$. We can see that
$\varphi$ induces a homeomorphism
$$\widetilde{\varphi}:\mathbb{P}^n/(\mu_{\varpi_0}\times\cdots\times\mu_{\varpi_n})\longrightarrow
\mathbb{P}(\varpi_0,\dots,\varpi_n).$$ Therefore, we get that
$\mathbb{P}(\varpi_0,\dots,\varpi_n)\simeq\mathbb{P}^n/(\mu_{\varpi_0}\times\cdots\times\mu_{\varpi_n})$
is a orbifold structure given as global quotient.

\begin{obs}\label{grau}
The degree of the map $\varphi:\mathbb{P}^n\longrightarrow
\mathbb{P}(\varpi)$ is equal the order of the group
$(\mu_{\varpi_0}\times\cdots\times\mu_{\varpi_n})$, i.e,
$deg(\varphi)=\varpi_0\cdots \varpi_n$. For details see \cite{Ab}.
\end{obs}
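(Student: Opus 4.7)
The plan is to exploit the factorization $\varphi=\widetilde\varphi\circ\pi$ exhibited just above the remark, where $\pi:\mathbb{P}^n\to\mathbb{P}^n/G$ is the quotient by $G=\mu_{\varpi_0}\times\cdots\times\mu_{\varpi_n}$ and $\widetilde\varphi$ is the homeomorphism onto $\mathbb{P}(\varpi)$. Since $\varphi$ is a finite surjective morphism between irreducible projective varieties of dimension $n$, its degree equals the cardinality of a generic fiber, so it suffices to show that a generic $G$-orbit in $\mathbb{P}^n$ consists of exactly $\varpi_0\cdots\varpi_n$ points.

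First I would fix a generic $q=[w_0:\cdots:w_n]\in\mathbb{P}(\varpi)$ with $w_0\cdots w_n\neq 0$, together with one choice of $\varpi_i$-th root $\alpha_i$ of $w_i$ for each $i$. Unwinding the two $\mathbb{C}^*$-actions (the one scaling $\mathbb{P}^n$ and the weighted one defining $\mathbb{P}(\varpi)$), the preimages of $q$ under $\varphi$ are precisely the projective classes $[\zeta_0\alpha_0:\cdots:\zeta_n\alpha_n]\in\mathbb{P}^n$ with $\zeta_i\in\mu_{\varpi_i}$, and two such tuples agree in $\mathbb{P}^n$ exactly when there is a scalar $\mu\in\mathbb{C}^*$ that lies in $\mu_{\varpi_i}$ for every $i$. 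So the fiber has cardinality $|G|/|\bigcap_i \mu_{\varpi_i}|$.

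The main technical step, and the only place the hypothesis enters, is to show $\bigcap_{i=0}^{n}\mu_{\varpi_i}=\{1\}$. Here I would appeal to pairwise coprimality of the weights: if $\mu^{\varpi_0}=\mu^{\varpi_1}=1$, B\'ezout yields integers $a,b$ with $a\varpi_0+b\varpi_1=1$, whence $\mu=\mu^{a\varpi_0+b\varpi_1}=1$. Consequently the $G$-action on $\mathbb{P}^n$ is free on the dense open set where all coordinates are nonzero, and the generic fiber of $\varphi$ has exactly $|G|=\varpi_0\cdots\varpi_n$ elements, giving $\deg(\varphi)=\varpi_0\cdots\varpi_n$ as claimed. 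The only genuinely non-formal input is thus the triviality of the common root-of-unity subgroup; everything else is bookkeeping with the two $\mathbb{C}^*$-actions in play.
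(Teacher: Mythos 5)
Your argument is correct. Note, however, that the paper does not actually prove this remark: it simply defers to the reference \cite{Ab}, so there is no ``paper proof'' to match against. What you supply is a complete, self-contained computation of the generic fiber: over a point of $\mathbb{P}(\varpi)$ with all coordinates nonzero, the preimages under $\varphi([z_0:\cdots:z_n])=[z_0^{\varpi_0}:\cdots:z_n^{\varpi_n}]$ form a single orbit of $G=\mu_{\varpi_0}\times\cdots\times\mu_{\varpi_n}$, of cardinality $|G|/\bigl|\bigcap_i\mu_{\varpi_i}\bigr|$, and the pairwise coprimality of the weights (the paper's standing hypothesis in Section 2) forces $\bigcap_i\mu_{\varpi_i}=\{1\}$ via B\'ezout. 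This is exactly the same triviality statement the author himself invokes later, in the proof of Proposition 2.2, to get $Ker(\mathbb{P}(\varpi))=\bigcap_{i}\mu_{\varpi_i}=\{1\}$, so your key step is fully consistent with the rest of the paper. The only point worth making explicit, which you state but do not justify, is that the degree of the finite surjective morphism $\varphi$ between irreducible $n$-dimensional projective varieties equals the cardinality of a general fiber; in characteristic zero this is standard (the extension of function fields is separable), so the gap is cosmetic rather than substantive.
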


\subsection{Toric structure of  $\mathbb{P}(\varpi_0,\dots,\varpi_n)$}

Let $\varpi=\{\varpi_0,\dots,\varpi_n\}$   be the set of positive
integers satisfying the condition $gcd(\varpi_0,\dots,\varpi_n) = 1$
. Choose $n+1$ vectors $e_0,\dots,e_n$ in a $n$-dimensional real
space $V$ such that $V$ is spanned by $e_0,\dots,e_n$  and there
exists the linear relation
$$
e_0\varpi_0+\dots+e_n\varpi_n=0.
$$
Define the lattice $N=\langle e_0,\dots,e_n\rangle_{\mathbb{Z}}$
consisting of all integral linear combinations of $e_0,\dots,e_n$.
Observe that $N\otimes\mathbb{R}=V$. Let $\Sigma$ be the set of all
possible simplicial cones in V generated by proper subsets of
$e_0,\dots,e_n$. Then $\Sigma(\varpi)$  is a rational simplicial
complete $n$-dimensional fan.

We will show that The toric variety $X_{\Sigma(\varpi)}$ associated
to fan $\Sigma(\varpi)$ is isomorphic to
$\mathbb{P}(\varpi)=\mathbb{P}(\varpi_0,\dots,\varpi_n)$. For this,
we shall use the construction due to D. Cox \cite{C}, of toric
variety as global quotient.

Let $\rho_1,\dots,\rho_r$ the one-dimensional cones of a fan
$\Sigma$ and $\textbf{n}_i\in \mathbb{Z}^n$ denote the primitive
element generator of $\rho_i\cap \mathbb{Z}^n$. Then introduce
variables $z_i$, for $i=1,\dots,r$, for each cone $\sigma\in
\Sigma$. We get the monomial
$$
z^{\widehat{\sigma}}=\prod_{\textbf{n}_i\notin \sigma}z_i
$$
which is the product of all variables not coming form edges of
$\sigma$. Then define $\mathcal{Z}=V(z^{\widehat{\sigma}}; \sigma
\in \Sigma)\subset \mathbb{C}^r$. Now consider the group $G\subset
(\mathbb{C}^*)^r$ given by
$$
G=\{(t_1,\dots,t_r)\in (\mathbb{C}^*)^r;\prod_{i=1}^{r}t_i^{\langle
e_j,\textbf{n}_i\rangle}=1, j=1,\dots, r \}
$$

\begin{teo}\cite{C}\label{cox}
If $X_{\Sigma}$ is a toric variety where
$\textbf{n}_1,\dots,\textbf{n}_r$ span $\mathbb{R}^n$, then:

\begin{enumerate}

  \item [i)]$X_{\Sigma}$ is a universal categorical quotient $(\mathbb{C}^r-{\mathcal{Z}})/G$

  \item [ii)] $X_{\Sigma}$ is a orbifold
  $(\mathbb{C}^r-{\mathcal{Z}})/G$ if and only if $X_{\Sigma}$ is
  simplicial

\end{enumerate}

\end{teo}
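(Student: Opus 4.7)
The plan is to follow Cox's original strategy from \cite{C}: realize $X_\Sigma$ as the union of affine toric varieties $U_\sigma$ indexed by $\sigma\in\Sigma$ and simultaneously decompose $\mathbb{C}^r-\mathcal{Z}$ as a union of $G$-invariant affine opens $W_\sigma=\{z\in\mathbb{C}^r:z^{\widehat{\sigma}}\neq 0\}$, and then verify affine-locally that $U_\sigma=W_\sigma/\!/G$. The definition of $\mathcal{Z}=V(z^{\widehat{\sigma}}:\sigma\in\Sigma)$ is engineered precisely so that $\{W_\sigma\}_{\sigma\in\Sigma}$ is an open cover mirroring the fan decomposition, which is the key geometric point that makes the gluing possible.

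First I would set up the quotient map on a single cone. For $\sigma\in\Sigma$, embed the dual lattice $M=\mathrm{Hom}(N,\mathbb{Z})$ into $\mathbb{Z}^r$ by $m\mapsto(\langle m,\textbf{n}_i\rangle)_i$ and verify that the semigroup algebra $\mathbb{C}[\sigma^{\vee}\cap M]$ coincides with the $G$-invariant subring $\mathbb{C}[W_\sigma]^G$. The calculation is to decompose $\mathbb{C}[W_\sigma]$ into weight spaces under the diagonalizable group $G$ (whose character group is read off from the defining equations $\prod t_i^{\langle e_j,\textbf{n}_i\rangle}=1$) and match the weight-zero piece with $\mathbb{C}[\sigma^{\vee}\cap M]$. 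This yields a morphism $W_\sigma\to U_\sigma$, and one then checks that fibers over the open torus are single $G$-orbits and that the degenerations along boundary strata correspond to the toric stratification of $U_\sigma$.

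Gluing these local quotients produces the global morphism $\pi:\mathbb{C}^r-\mathcal{Z}\to X_\Sigma$, and part i) follows from the principle that a morphism which is affine-locally a good categorical quotient by a linearly reductive group is itself a universal categorical quotient. The main obstacle is the universality assertion: I would invoke the standard GIT fact that affine good quotients by reductive groups remain categorical after arbitrary flat base change, using that $G$ is diagonalizable (hence linearly reductive in all characteristics of interest) so that the functor of $G$-invariants is exact on quasi-coherent sheaves.

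For ii), I would analyze the isotropy subgroups. For a point $z\in W_\sigma$ whose nonzero coordinates are exactly those indexed by edges of $\sigma$, the stabilizer $G_z$ is finite if and only if the primitive generators $\{\textbf{n}_i:\rho_i\subset\sigma\}$ are $\mathbb{R}$-linearly independent, i.e., $\sigma$ is simplicial; this reduces to a rank computation for the matrix of pairings $\langle e_j,\textbf{n}_i\rangle$ restricted to $\rho_i\subset\sigma$. Since every $G$-orbit in $\mathbb{C}^r-\mathcal{Z}$ meets exactly one such toric stratum, the $G$-action has only finite stabilizers everywhere, so the quotient is an orbifold, precisely when every cone $\sigma\in\Sigma$ is simplicial.
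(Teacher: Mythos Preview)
The paper does not supply a proof of this theorem at all: it is stated with the attribution \cite{C} and then immediately used, so there is no argument in the paper to compare your proposal against. Your sketch is precisely Cox's original strategy---the affine-local identification $\mathbb{C}[W_\sigma]^G\simeq\mathbb{C}[\sigma^{\vee}\cap M]$, gluing over the fan, and the stabilizer computation characterizing simpliciality---so you are effectively reproducing the content of the cited reference rather than diverging from anything in the present paper.
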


Now we return to the weighted projective space. Consider the toric
variety $X_{\Sigma(\varpi)}$ associated to the fan $\Sigma(\varpi)$
with the generators $e_0,\dots,e_n$ of the one-dimensional cone,
with relation $e_0=-\sum_{i=1}^ne_i(\varpi_i/\varpi_0).$ In this
case we see that $\mathcal{Z}=V(z_0,\dots,z_n)=\{0\}$ and the group
$G$ is given by the relations
$$
t_0^{-\frac{\varpi_1}{\varpi_0}}t_1=t_0^{-\frac{\varpi_2}{\varpi_0}}t_2=\cdots=t_0^{-\frac{\varpi_n}{\varpi_0}}t_n=1
$$
If we shall take $t=t_0^{\frac{1}{\varpi_0}}$ we get
$G(\varpi)=\{(t^{\varpi_0},\dots,t^{\varpi_n})\in
(\mathbb{C}^*)^{n+1}\}\simeq \mathbb{C}^* $. Therefore, from theorem
\ref{cox} we have that
$$
X_{\Sigma(\varpi)}\simeq (\mathbb{C}^n)/G(\varpi)\simeq
\mathbb{P}(\varpi).
$$
This show that the weighted projective space $\mathbb{P}(\varpi)$ is
a complete simplicial toric variety.

\subsection{$\mathbb{Q}$-line bundles of
$\mathbb{P}(\varpi_0,\dots,\varpi_n)$}

Let $\frac{d}{r}\in \mathbb{Q}$ be, with $gcd(r,d)=1$ and $r>0$.
Consider the $\mathbb{C}^*$-action
$\zeta_{\left(\frac{d}{r}\right)}$ on $\mathbb{C}^{n+1}\backslash
\{0\}\times \mathbb{C}$ given by
$$
\begin{array}{ccc}
   \zeta_{\left(\frac{d}{r}\right)}:\mathbb{C}^*\times\mathbb{C}^{n+1}\backslash
   \{0\}\times \mathbb{C}& \longrightarrow & \mathbb{C}^{n+1}\backslash \{0\}\times \mathbb{C} \\
  (\lambda,(z_0,\dots,z_n),t) & \longmapsto & ((\lambda^{r\varpi_0}
z_0,\dots,\lambda^{r\varpi_n} z_n),\lambda^{-d}t).
\end{array}
$$
We denote  quotient space induced by the action
$\zeta_{\left(\frac{d}{r}\right)}$ by
$$\mathscr{O}_{\mathbb{P}(\varpi)}(d/r):=(\mathbb{C}^{n+1}\backslash
\{0\}\times \mathbb{C})/\sim \zeta_{\left(\frac{d}{r}\right)}.$$ The
space $\mathscr{O}_{\mathbb{P}(\varpi)}(d/r)$ is a line orbibundle
on $\mathbb{P}(\varpi)$. We shall describe the global holomorphic
section of $\mathscr{O}_{\mathbb{P}(\varpi)}(d/r)$, for $d>0$.

\begin{prop}
Let $\mathbb{P}(\varpi):=\mathbb{P}(\varpi_0,\dots,\varpi_n)$, then
$$
H^0(\mathbb{P}(\varpi),\mathscr{O}_{\mathbb{P}(\varpi)}(d/r))=
\bigoplus_{\varpi_0k_0+\cdots+\varpi_nk_n=\frac{d}{r}}\mathbb{C}\cdot(z_0^{k_1}\cdots
z_n^{k_n}).
$$
\end{prop}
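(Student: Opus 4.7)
The plan is to translate global sections of the line orbibundle $\mathscr{O}_{\mathbb{P}(\varpi)}(d/r)$ into $\mathbb{C}^*$-equivariant functions on the punctured affine cone, and then read off which monomials are permitted by the equivariance law of the action $\zeta_{(d/r)}$.

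First I would recall the standard dictionary for line bundles defined as a quotient by a $\mathbb{C}^*$-action. A global section $s$ of $\mathscr{O}_{\mathbb{P}(\varpi)}(d/r)$ corresponds to a holomorphic map $\widetilde{s}\colon \mathbb{C}^{n+1}\setminus\{0\}\longrightarrow \mathbb{C}^{n+1}\setminus\{0\}\times \mathbb{C}$ of the form $\widetilde{s}(z)=(z,f(z))$ that commutes with $\zeta_{(d/r)}$. Unpacking the action, this is equivalent to requiring a holomorphic function $f\colon \mathbb{C}^{n+1}\setminus\{0\}\to \mathbb{C}$ satisfying the equivariance relation
$$
f(\lambda^{r\varpi_0}z_0,\dots,\lambda^{r\varpi_n}z_n)=\lambda^{d}f(z_0,\dots,z_n),\qquad \lambda\in \mathbb{C}^*.
$$

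Next, since $n+1\geq 2$, Hartogs' extension theorem lets me extend $f$ to an entire holomorphic function on $\mathbb{C}^{n+1}$. Expanding it as a convergent power series $f(z)=\sum a_{k_0\dots k_n}z_0^{k_0}\cdots z_n^{k_n}$ and plugging into the equivariance relation yields
$$
\sum a_{k_0\dots k_n}\lambda^{r(\varpi_0 k_0+\cdots+\varpi_n k_n)}z_0^{k_0}\cdots z_n^{k_n}=\lambda^{d}\sum a_{k_0\dots k_n}z_0^{k_0}\cdots z_n^{k_n}.
$$
Since this must hold for every $\lambda\in \mathbb{C}^*$ and the monomials are linearly independent, one obtains $a_{k_0\dots k_n}=0$ unless $r(\varpi_0k_0+\cdots+\varpi_n k_n)=d$, i.e.\ unless $\varpi_0k_0+\cdots+\varpi_n k_n=d/r$. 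Conversely, any monomial satisfying this identity defines an admissible equivariant function and hence a global section, so the claimed direct sum decomposition follows.

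The only real subtlety, which I would flag carefully, is that $d/r$ is rational rather than an integer, so one must check that for the sum $\sum \varpi_i k_i$, an integer, to equal $d/r$, compatibility forces $r\mid d$ in those monomials that actually appear; the coprimality hypothesis $\gcd(r,d)=1$ then ensures that the fibre sum is nonempty only when $r=1$, a point worth recording but not an obstacle to the statement. The main step of substance is the equivariance-plus-Hartogs argument above; once set up, the verification is monomial-by-monomial and purely formal.
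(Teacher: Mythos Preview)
Your argument is correct and follows essentially the same route as the paper's proof: both identify global sections with $\mathbb{C}^*$-equivariant functions on $\mathbb{C}^{n+1}\setminus\{0\}$ and then determine, monomial by monomial, the weighted-degree condition $\sum_i r\varpi_i k_i = d$. You are simply more explicit, supplying the Hartogs extension and the power-series comparison that the paper takes for granted.
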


\begin{proof}
A global section of this line orbibundle is a linear combination of
the monomials $z^{k}=z_0^{k_1}\cdots z_n^{k_n}$, invariants by the
action $\zeta_{\left(\frac{d}{r}\right)}$, that is,
$\zeta_{\left(\frac{d}{r}\right)}([z,z^{k}])=[z,z^{k}].$ Using this
action we obtain
$$
\begin{array}{lll}
  [(z_0\dots ,z_n),(z_0^{k_1}\cdots z_n^{k_n})] & = & [(\lambda^{r\varpi_0} z_0,\dots,\lambda^{r\varpi_n}
z_n),\lambda^{\sum_{i=0}^{n}r\varpi_ik_i}(z_0^{k_1}\cdots z_n^{k_n})]\\
  & =&[(z_0\dots ,z_n),\lambda^{-d+\sum_{i=0}^{n}r\varpi_ik_i}(z_0^{k_1}\cdots
  z_n^{k_n})].
\end{array}
$$
Therefore $\sum_{i=0}^{n}r\varpi_ik_i=d$, hence the proposition
follows.
\end{proof}

The orbibundles $\mathscr{O}_{\mathbb{P}(\varpi)}(d/r)$ can
therefore be considered as elements of the rational Picard group of
$\mathbb{P}(\varpi)$, that is, as $\mathbb{Q}$-line bundles. Is
possible to show that the $\mathbb{Q}$-Picard group can be generated
by $\mathscr{O}_{\mathbb{P}(\varpi)}(\varpi_0\dots\varpi_n)$, that
is
$$
Pic(X)\otimes\mathbb{Q} := Pic(X)_{\mathbb{Q}}=\mathbb{Q}\cdot
 \mathscr{O}_{\mathbb{P}(\varpi)}(\varpi_0\dots\varpi_n).
$$

\begin{obs}
It is possible to show that
$\mathscr{O}_{\mathbb{P}(\varpi)}(1)=\varphi^*(\mathscr{O}_{\mathbb{P}^n}(1))$,
where $\mathscr{O}_{\mathbb{P}^n}(1)$ is a line bundle on $\p$. We
Recall that nor always the pull-back of orbibundle is defined. For
this to be possible the map, in question between orbifold, must to
satisfies a certain condition of "goodness".
\end{obs}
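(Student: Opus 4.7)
The plan is to interpret the displayed identity as $\varphi^{*}\mathscr{O}_{\mathbb{P}(\varpi)}(1)\cong\mathscr{O}_{\mathbb{P}^{n}}(1)$, which is the only sensible reading given the source and target of $\varphi\colon\mathbb{P}^{n}\to\mathbb{P}(\varpi)$. Before the actual computation I would settle the goodness issue flagged by the author: by the identification $\widetilde{\varphi}\colon\mathbb{P}^{n}/G\xrightarrow{\sim}\mathbb{P}(\varpi)$ with $G=\mu_{\varpi_{0}}\times\cdots\times\mu_{\varpi_{n}}$ from Section~2.1, the map $\varphi$ is the Galois quotient by the finite abelian group $G$, and every isotropy group of the orbifold $\mathbb{P}(\varpi)$ is the image of a subgroup of $G$. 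This is precisely the standard setting in which the pullback of orbibundles is well defined.

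Next, I would realise both line (orbi)bundles concretely and build an explicit isomorphism. Recall that $\mathscr{O}_{\mathbb{P}(\varpi)}(1)$ is the quotient of $(\mathbb{C}^{n+1}\setminus\{0\})\times\mathbb{C}$ by the $\mathbb{C}^{*}$-action
\[
\lambda\cdot(z_{0},\dots,z_{n};t)=(\lambda^{\varpi_{0}}z_{0},\dots,\lambda^{\varpi_{n}}z_{n};\lambda^{-1}t),
\]
while $\mathscr{O}_{\mathbb{P}^{n}}(1)$ is the analogous quotient with all weights equal to $1$. I would then define the lift
\[
\widehat{\varphi}\colon(\mathbb{C}^{n+1}\setminus\{0\})\times\mathbb{C}\longrightarrow(\mathbb{C}^{n+1}\setminus\{0\})\times\mathbb{C},\qquad (z;t)\longmapsto(z_{0}^{\varpi_{0}},\dots,z_{n}^{\varpi_{n}};t).
\]
For $\mu\in\mathbb{C}^{*}$ a direct computation shows $\widehat{\varphi}(\mu z;\mu^{-1}t)$ equals the image of $\widehat{\varphi}(z;t)$ under the $\mathbb{P}(\varpi)$-action at $\lambda=\mu$, so $\widehat{\varphi}$ descends to a holomorphic map between the two total spaces that covers $\varphi$ and is the identity on the $\mathbb{C}$-fibre. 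This is exactly the datum of an isomorphism $\mathscr{O}_{\mathbb{P}^{n}}(1)\xrightarrow{\sim}\varphi^{*}\mathscr{O}_{\mathbb{P}(\varpi)}(1)$.

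As a parallel sanity check (which also clarifies the $\mathbb{Q}$-structure), I would compare sections: by the previous proposition, a section of $\mathscr{O}_{\mathbb{P}(\varpi)}(k)$ is a weighted-homogeneous polynomial of weight $k$ in the $z_{i}$, and its $\varphi$-pullback substitutes $z_{i}\mapsto z_{i}^{\varpi_{i}}$, producing an ordinary homogeneous polynomial of degree $k$ on $\mathbb{P}^{n}$. Specialising to $k=1$ and recalling that $\mathscr{O}_{\mathbb{P}(\varpi)}(\varpi_{0}\cdots\varpi_{n})$ generates the integral Picard group, this recovers the claimed isomorphism at the level of $\mathbb{Q}$-line bundles. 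The only real obstacle is bookkeeping at the singular points $[0,\dots,1,\dots,0]\in\mathrm{Sing}(\mathbb{P}(\varpi))$: one must verify $\mu_{\varpi_{i}}$-equivariance of $\widehat{\varphi}$ in each uniformizing chart $(\mathbb{C}^{n}/\mu_{\varpi_{i}},\phi_{i})$, but there $\widehat{\varphi}$ restricts to the $\varpi_{i}$-th power map, which is the uniformization itself, so the local check is automatic.
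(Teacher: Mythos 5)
Your proposal is correct, and it actually supplies more than the paper does: the remark is stated without any proof at all (the author merely asserts ``it is possible to show'' and points to the goodness condition for pullbacks of orbibundles, with the nearest related computation being the use of $\varphi^*(\mathscr{O}_{\mathbb{P}(\varpi)}(1))=\mathscr{O}_{\mathbb{P}^n}(1)$ inside the proof of Proposition \ref{prop1}). You were right to read the displayed formula in the corrected direction $\varphi^{*}\mathscr{O}_{\mathbb{P}(\varpi)}(1)\cong\mathscr{O}_{\mathbb{P}^{n}}(1)$, since $\varphi$ maps $\mathbb{P}^n$ to $\mathbb{P}(\varpi)$ and this is the form the paper actually uses later. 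Your explicit equivariant lift $\widehat{\varphi}(z;t)=(z_0^{\varpi_0},\dots,z_n^{\varpi_n};t)$ intertwines the standard $\mathbb{C}^*$-action defining $\mathscr{O}_{\mathbb{P}^n}(1)$ with the weighted action $\zeta_{(1/1)}$ defining $\mathscr{O}_{\mathbb{P}(\varpi)}(1)$, and being the identity on the fibre coordinate it descends to a fibrewise isomorphism over $\varphi$; this is exactly the datum needed, and your observation that $\varphi$ is the global quotient by $\mu_{\varpi_0}\times\cdots\times\mu_{\varpi_n}$ disposes of the goodness requirement. Two minor caveats: the section-comparison ``sanity check'' at $k=1$ can be vacuous (e.g.\ $H^0(\mathbb{P}(\varpi),\mathscr{O}(1))=0$ when no $\varpi_i=1$, while $h^0(\mathbb{P}^n,\mathscr{O}(1))=n+1$), so it only confirms injectivity of pullback on sections rather than the isomorphism itself; and the local check at the singular points deserves one more sentence making explicit that the fibre of the orbibundle there is a $\mu_{\varpi_i}$-quotient, so that the lift to the uniformizing chart, not the naive fibre product, is what must be verified. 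Neither caveat affects the correctness of your main construction.
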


There exist a exact sequence of orbibundle on $\mathbb{P}(\varpi)$
similar the Euler's seguence on $\p.$
\begin{teo}\cite{M}
Then there exist a exact sequence given by
$$
0\longrightarrow
\underline{\mathbb{C}}\stackrel{\varsigma}{\longrightarrow}\bigoplus_{i=0}^{n}\mathscr{O}_{\mathbb{P}(\varpi)}(\varpi_i)
\longrightarrow T\mathbb{P}(\varpi) \longrightarrow 0
$$
where $\underline{\mathbb{C}}=\mathbb{P}(\varpi)\times\mathbb{C}$ is
the trivial line orbibundle on $\mathbb{P}(\varpi)$.
\end{teo}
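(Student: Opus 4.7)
The plan is to mimic the classical Euler sequence on $\p$ using the Cox-type presentation $\pi\colon \mathbb{C}^{n+1}\setminus\{0\}\to \mathbb{P}(\varpi)$ of Subsection 2.2, which realises $\mathbb{P}(\varpi)$ as the quotient by the weighted $\mathbb{C}^*$-action with structure group $G(\varpi)\simeq\mathbb{C}^*$. Over the total space the tangent bundle is free, spanned by $\partial/\partial z_0,\dots,\partial/\partial z_n$, and the infinitesimal generator of the action is the weighted Euler vector field $R_\varpi=\sum_{i=0}^{n}\varpi_i z_i\,\partial/\partial z_i$.

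First I would record the submersion sequence on the total space,
$$
0\longrightarrow \underline{\mathbb{C}}\!\cdot\! R_\varpi \longrightarrow T(\mathbb{C}^{n+1}\setminus\{0\}) \stackrel{d\pi}{\longrightarrow} \pi^{*}T\mathbb{P}(\varpi)\longrightarrow 0,
$$
whose exactness is just the observation that the fibers of $\pi$ are the $\mathbb{C}^*$-orbits, whose tangent lines are spanned by $R_\varpi$. This sequence is manifestly $\mathbb{C}^*$-equivariant.

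Next I would descend by taking $\mathbb{C}^*$-invariants. A direct computation under $\phi_\lambda(z)=(\lambda^{\varpi_0}z_0,\dots,\lambda^{\varpi_n}z_n)$ gives
$$
\phi_\lambda^{*}\!\left(\partial/\partial z_i\right)=\lambda^{-\varpi_i}\,\partial/\partial z_i\qquad\text{and}\qquad \phi_\lambda^{*}R_\varpi=R_\varpi.
$$
Comparing with the definition of $\mathscr{O}_{\mathbb{P}(\varpi)}(d/r)$ in Subsection 2.3, the sub-line-bundle spanned by $\partial/\partial z_i$ descends to $\mathscr{O}_{\mathbb{P}(\varpi)}(\varpi_i)$, while the trivial sub-line-bundle generated by $R_\varpi$ descends to $\underline{\mathbb{C}}$. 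Consequently the displayed sequence descends to the sequence in the statement, with $\varsigma\colon 1\mapsto(\varpi_0 z_0,\dots,\varpi_n z_n)$ written in the basis $\partial/\partial z_i$.

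The main obstacle I expect is the descent at the singular points $[0{:}\cdots{:}1{:}\cdots{:}0]$ of $\mathbb{P}(\varpi)$, where $\pi$ fails to be an honest principal $\mathbb{C}^*$-bundle: on the locus where all $z_i\neq 0$ descent of equivariant sheaves is routine, but near the coordinate axes I would transfer the picture to the orbifold uniformizers $\phi_i\colon \mathcal{U}_i\to \mathbb{C}^{n}/\mu_{\varpi_i}$ of Subsection 2.1 and verify that the three sheaves correspond to the correct $\mu_{\varpi_i}$-equivariant bundles upstairs. Exactness is then inherited from the equivariant sequence since taking invariants under the reductive group $\mathbb{C}^*$, and under the finite cyclic groups $\mu_{\varpi_i}$, is an exact functor in characteristic zero.
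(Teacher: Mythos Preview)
The paper does not supply its own proof of this statement: it is quoted verbatim from \cite{M}, and the only thing added is the explicit description $\varsigma(1)=(\varpi_0 z_0,\dots,\varpi_n z_n)$. There is therefore nothing in the paper to compare your argument against.

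That said, your sketch is the standard derivation of the weighted Euler sequence and is essentially correct. You lift to $\mathbb{C}^{n+1}\setminus\{0\}$, use the short exact sequence of the submersion $\pi$ with kernel generated by the weighted radial field $R_\varpi$, identify the equivariant weights of $\partial/\partial z_i$ with the line orbibundles $\mathscr{O}_{\mathbb{P}(\varpi)}(\varpi_i)$ via the description in Subsection~2.3, and descend. Your final remark on checking exactness at the orbifold points via the uniformizing charts $(\mathbb{C}^n,\mu_{\varpi_i})$ and exactness of taking invariants for finite (or reductive) groups in characteristic zero is exactly the point needed to make the descent rigorous in the orbifold category, and is the only step that is genuinely different from the classical case $\mathbb{P}^n$.
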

Also, we shall call this sequence as Euler's sequence. The map
$\varsigma$ is given by $
 \varsigma(1)=(\varpi_0z_0,\dots,\varpi_nz_n).$

\begin{defi}
Let $X$ be a $n$-dimensional compact complex orbifold with
uniformizing system $\{(\mathcal{U}_i,G_i,\pi_i)\}_{i\in \Lambda}$
and $\omega\in \Omega_{X}^n$ a $n$-form. The orbifold integral of
$\omega $ on $X$ is defined by
$$
\displaystyle\int_{X}^{orb} \omega= \sum_{i\in
\Lambda}\frac{1}{|G_i|}\displaystyle\int_{\mathcal{U}_i}\pi_i^*\omega,
$$
where $|G_i|$ is the order of the group $G_i$.
\end{defi}

\begin{obs}
Let $Ker(X)=\{g\in \coprod_{i\in \Lambda} G_i; g(x)=x, \forall \
x\in X \}$ and $X_{reg}=X\backslash\{sing(X)\}$. Then
$$
\displaystyle\int_{X}^{orb}
\omega=\frac{1}{\#Ker(X)}\displaystyle\int_{X_{reg}}\omega
$$
See \cite{M}.
\end{obs}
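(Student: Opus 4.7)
The plan is to reduce the identity to a chart-by-chart statement using a partition of unity $\{\rho_i\}$ subordinate to the open cover $\{\pi_i(\mathcal{U}_i)\}_{i\in\Lambda}$ of $X$. By the definition of the orbifold integral,
$$
\int_X^{orb}\omega \;=\; \sum_{i\in\Lambda}\frac{1}{|G_i|}\int_{\mathcal{U}_i}\pi_i^*(\rho_i\omega),
$$
so it suffices to prove, for each chart, the local identity
$$
\frac{1}{|G_i|}\int_{\mathcal{U}_i}\pi_i^*(\rho_i\omega) \;=\; \frac{1}{\#Ker(X)}\int_{\pi_i(\mathcal{U}_i)\cap X_{reg}}\rho_i\omega,
$$
after which summing over $i$ and using $\sum_i\rho_i\equiv 1$ on $X_{reg}$ yields the desired formula.

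The main ingredient is the covering-degree computation in a single chart. Set $K_i:=G_i\cap Ker(X)$, the subgroup of elements acting trivially on $\mathcal{U}_i$. From the compatibility of uniformizing charts on overlaps one checks that $|K_i|=\#Ker(X)$ is independent of $i$. The quotient $G_i^{eff}:=G_i/K_i$ acts effectively on $\mathcal{U}_i$, and for a holomorphic orbifold the locus where some non-trivial element of $G_i^{eff}$ has a fixed point is a complex-analytic subset of positive codimension, namely $\pi_i^{-1}(sing(X)\cap\pi_i(\mathcal{U}_i))$, which has real Lebesgue measure zero. Consequently, $\pi_i$ restricted to $\mathcal{U}_i\setminus \pi_i^{-1}(sing(X))$ is an unramified holomorphic covering onto $\pi_i(\mathcal{U}_i)\cap X_{reg}$ of degree $|G_i^{eff}|=|G_i|/\#Ker(X)$.

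With this covering structure in hand, the standard change-of-variables formula for a finite unramified covering yields
$$
\int_{\mathcal{U}_i}\pi_i^*(\rho_i\omega) \;=\; \int_{\mathcal{U}_i\setminus \pi_i^{-1}(sing(X))}\pi_i^*(\rho_i\omega) \;=\; \frac{|G_i|}{\#Ker(X)}\int_{\pi_i(\mathcal{U}_i)\cap X_{reg}}\rho_i\omega,
$$
the first equality being valid because the ramification locus has measure zero and $\rho_i\omega$ is a smooth top-degree form. Dividing by $|G_i|$ gives the local identity, and summing over $i$ completes the proof.

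The step I expect to be the main obstacle is the verification that $|K_i|$ is genuinely independent of $i$: it requires one to argue, from the compatibility of the orbifold atlas, that for intersecting charts the injection of one uniformizing group into the germ of the other identifies their respective generic-isotropy subgroups. Once that point is settled, the remainder is a routine change-of-variables computation on a branched cover together with a partition-of-unity assembly.
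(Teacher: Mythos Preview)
The paper does not actually prove this remark; it simply refers the reader to \cite{M}. Your argument, by contrast, supplies a complete direct proof, and it is the standard one: the reduction via a partition of unity, the identification of $\pi_i$ restricted to the complement of the ramification locus as an unramified covering of degree $|G_i^{eff}|=|G_i|/\#Ker(X)$ onto $\pi_i(\mathcal{U}_i)\cap X_{reg}$, and the change-of-variables computation are exactly how this identity is established. (Note that the paper's stated definition of $\int_X^{orb}\omega$ omits the partition of unity; inserting it, as you did, is the correct way to make the definition well-posed and is implicit in the cited source.)

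The one nontrivial point is the one you already flagged yourself --- that the order of the ineffective kernel $K_i\subset G_i$ is independent of the chart. This is indeed a consequence of the compatibility axioms for an orbifold atlas: an embedding of uniformizing charts induces an injection of local groups that carries the kernel of one action isomorphically onto the kernel of the other, so the generic isotropy has a well-defined global order. With that settled, the rest of your argument goes through without difficulty. There is nothing further to compare against in the paper itself; you have filled in what the paper left as a citation.
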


\begin{prop}\label{prop1}
Let $\mathscr{O}_{\mathbb{P}(\varpi)}(1)$ be the hyperplane bundle
on $\mathbb{P}(\varpi)$, then
$$
\displaystyle\int_{\mathbb{P}(\varpi)}^{orb}
c_1(\mathscr{O}_{\mathbb{P}(\varpi)}(1))^n=\dfrac{1}{\varpi_0\dots\varpi_n}.
$$
\end{prop}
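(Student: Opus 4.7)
The plan is to exploit the global quotient orbifold structure $\mathbb{P}(\varpi)\simeq \mathbb{P}^n/(\mu_{\varpi_0}\times\cdots\times\mu_{\varpi_n})$ constructed earlier, and to reduce the orbifold integral on $\mathbb{P}(\varpi)$ to an ordinary integral on $\mathbb{P}^n$. The three ingredients I plan to combine are: Remark \ref{grau}, which states that $\deg(\varphi)=\varpi_0\cdots\varpi_n$, equivalently $|\mu_{\varpi_0}\times\cdots\times\mu_{\varpi_n}|=\varpi_0\cdots\varpi_n$; the identity $\varphi^{*}\mathscr{O}_{\mathbb{P}(\varpi)}(1)=\mathscr{O}_{\mathbb{P}^n}(1)$ from the remark immediately preceding the Euler sequence; and the classical fact $\int_{\mathbb{P}^n} c_1(\mathscr{O}_{\mathbb{P}^n}(1))^n=1$.

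First I would pick as uniformizing system the one coming from the global quotient: a single cover $\mathbb{P}^n$ with group $G=\mu_{\varpi_0}\times\cdots\times\mu_{\varpi_n}$ and projection $\varphi\colon\mathbb{P}^n\to\mathbb{P}(\varpi)$. Applying the definition of orbifold integration to this system gives
$$
\int_{\mathbb{P}(\varpi)}^{orb} c_1(\mathscr{O}_{\mathbb{P}(\varpi)}(1))^n \;=\; \frac{1}{|G|}\int_{\mathbb{P}^n}\varphi^{*}\bigl(c_1(\mathscr{O}_{\mathbb{P}(\varpi)}(1))^n\bigr).
$$
The next step is to invoke naturality of Chern classes under pullback together with the identification of $\varphi^{*}\mathscr{O}_{\mathbb{P}(\varpi)}(1)$ with $\mathscr{O}_{\mathbb{P}^n}(1)$, so that $\varphi^{*}c_1(\mathscr{O}_{\mathbb{P}(\varpi)}(1))^n=c_1(\mathscr{O}_{\mathbb{P}^n}(1))^n$. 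Substituting $|G|=\varpi_0\cdots\varpi_n$ and the value $1$ of the last integral then yields the asserted formula.

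The main subtlety I expect is the orbifold nature of $\mathscr{O}_{\mathbb{P}(\varpi)}(1)$: it is only a $\mathbb{Q}$-line bundle, so its first Chern class lies in rational cohomology and, in principle, the pullback under an orbifold morphism requires the map to be \emph{good}. Here this is not an obstacle because the orbifold structure on $\mathbb{P}(\varpi)$ used in this argument is the one directly induced by $\varphi$ itself, so the local lifts of $\varphi$ are trivial and naturality applies in the usual sense. As an independent check one could instead use the first orbifold atlas $\{(\mathcal{U}_i,\mu_{\varpi_i},\pi_i\circ\phi_i)\}_{i=0}^{n}$ of Section 2.1, assemble the integral via a partition of unity, and verify that the contributions of the charts recombine to $\frac{1}{\varpi_0\cdots\varpi_n}$; this route is more laborious but avoids any appeal to goodness.
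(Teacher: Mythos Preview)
Your proposal is correct and follows essentially the same route as the paper: both arguments pull the integral back to $\mathbb{P}^n$ via $\varphi$, use $\varphi^*\mathscr{O}_{\mathbb{P}(\varpi)}(1)=\mathscr{O}_{\mathbb{P}^n}(1)$, and divide by $\deg(\varphi)=\varpi_0\cdots\varpi_n$. The only cosmetic difference is that the paper first invokes the remark $\int_X^{orb}\omega=\frac{1}{\#Ker(X)}\int_{X_{reg}}\omega$ (and checks $\#Ker(\mathbb{P}(\varpi))=1$ since the $\varpi_i$ are pairwise coprime) before applying the degree formula on the regular part, whereas you apply the global-quotient definition of the orbifold integral in a single step.
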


\begin{proof}
From the definition of orbifold integral we have
$$
\displaystyle\int_{\mathbb{P}(\varpi)}^{orb}
c_1(\mathscr{O}_{\mathbb{P}(\varpi)}(1))^n=\dfrac{1}{\#Ker(\mathbb{P}(\varpi))}
\displaystyle\int_{\mathbb{P}(\varpi)_{reg}}
c_1(\mathscr{O}_{\mathbb{P}}(\varpi)(1))^n.$$ Since
$\mathbb{P}(\varpi_0,\dots,\varpi_n)\simeq\mathbb{P}^n/(\mu_{\varpi_0}\times\cdots\times\mu_{\varpi_n})$
we conclude that
$$Ker(\mathbb{P}(\varpi))=\bigcap_{i=0}^{n}\mu_{\varpi_i}=\{1\},$$
hence $\#Ker(\mathbb{P}(\varpi))=1$. On the other hand, since
$\varphi^*(\mathscr{O}_{\mathbb{P}(\varpi)}(1))=\mathscr{O}_{\mathbb{P}^n}(1)$
we get
$$
\displaystyle\int_{\mathbb{P}(\varpi)}^{orb}
c_1(\mathscr{O}_{\mathbb{P}(\varpi}(1))^n=
\displaystyle\int_{\mathbb{P}(\varpi)_{reg}}
c_1(\mathscr{O}_{\mathbb{P}}(\varpi)(1))^n=\frac{1}{deg(\varphi)}\displaystyle\int_{\mathbb{P}^n}
c_1(\mathscr{O}_{\mathbb{P}^n}(1))^n=\dfrac{1}{\varpi_0\dots\varpi_n}.
$$
The last equality follows from proposition \ref{grau}.
\end{proof}

\section{$\mathbb{Q}$-line bundle on simplicial toric variety and intersection number}

Let $X$ be a normal toric variety. Since a Weil divisor is a cycle
in $X$ of real dimension $2n-2$, we have a homomorphism
$$
\vartheta:\mathcal{W}(X)\longrightarrow H_{2n-2}(X,\mathbb{Z})
$$
which associates to each Weil divisor it's homology class. In other
hand, there exist (see \cite{H}) an isomorphism
$$
\alpha:\mathcal{C}(X)\stackrel{\simeq}{\longrightarrow} Pic(X)
$$
between the group of classes of Cartier divisors and the Picard
group. This one is the group of isomorphy classes of line bundles
(or isomorphy classes of invertible sheaves) on X. By composition of
$\alpha$ with the morphism $c_1:Pic(X)\rightarrow H^2(X)$, we obtain
a morphism denoted
$$
c_1:\mathcal{C}(X)\longrightarrow H^2(X).
$$
When $X$ is smooth we have that $c_1(\mathscr{O}(D))$ is the
Poincaré's dual of the cycle represented by $D\in \mathcal{C}(X)$.
In general case, we cannot guarantee this, but we will see that is
true if $D$ is a divisors invariants by torus action.

Let $\mathbb{T}$ be the torus which acts in $X$. Denote by
$\mathcal{C}^{\mathbb{T}}(X)$  and $\mathcal{W}^{\mathbb{T}}(X)$,
respectively, the groups of $\mathbb{T}$-invariants divisors of
Cartier and Weil, modulo equivalence of principal
$\mathbb{T}$-invariants divisors .

\begin{teo}\label{brasselet}\cite{BBFK}
Let $X$ be a compact toric variety, there exist a commutative
diagram
$$
\begin{array}{ccc}
  \mathcal{C}^{\mathbb{T}}(X) & \hookrightarrow & \mathcal{W}^{\mathbb{T}}(X) \\
  \downarrow_{\simeq} &  &  \downarrow_{\simeq}\\
  H^2(X,\mathbb{Z}) & \stackrel{\frown[X]}{\longrightarrow} & H_{2n-2}(X,\mathbb{Z})
\end{array}
$$
where the vertical isomorphism correspond to the morphisms $c_1$ and
$\vartheta$.
\end{teo}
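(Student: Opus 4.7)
My plan is to identify both vertical groups combinatorially from the fan $\Sigma$ of $X$ and then match them with the topological groups on the right via natural cycle maps, before verifying commutativity on generators. First, I would set up the combinatorial picture: $T$-invariant Weil divisors are freely generated by the orbit closures $D_\rho$ indexed by rays $\rho\in\Sigma(1)$, with $T$-invariant principal divisors arising from characters $m\in M=\mathrm{Hom}(T,\mathbb{C}^*)$ via $\mathrm{div}(\chi^m)=\sum_\rho \langle m,n_\rho\rangle D_\rho$, which yields the exact sequence
$$M \longrightarrow \mathbb{Z}^{\Sigma(1)} \longrightarrow \mathcal{W}^{\mathbb{T}}(X) \longrightarrow 0.$$
An invariant Cartier divisor, in turn, corresponds to an integral $\Sigma$-piecewise linear function $\psi$ on $|\Sigma|$, and the top inclusion $\mathcal{C}^{\mathbb{T}}(X)\hookrightarrow \mathcal{W}^{\mathbb{T}}(X)$ is exactly $\psi \mapsto \sum_\rho \psi(n_\rho)D_\rho$. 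This is standard in the theory of toric varieties.

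For the right vertical isomorphism $\vartheta$, I would exploit the orbit--cone correspondence: the stratification of $X$ by the locally closed affine strata $O(\sigma)$, one for each $\sigma\in\Sigma$, of real dimension $2(n-\dim\sigma)$. Filtering $X$ by closed unions of low-dimensional orbits and running the associated homology spectral sequence, all odd-degree terms vanish, and one obtains that $H_{2n-2}(X,\mathbb{Z})$ is freely generated by the classes $[D_\rho]$ modulo the image of $M$, matching $\mathcal{W}^{\mathbb{T}}(X)$ on the nose. For the left vertical, the exponential sheaf sequence together with $H^i(X,\mathcal{O}_X)=0$ for $i>0$ on a compact toric variety gives $\mathrm{Pic}(X)\cong H^2(X,\mathbb{Z})$; since every line bundle on $X$ admits a $T$-equivariant structure, the natural map $\mathcal{C}^{\mathbb{T}}(X)\to \mathrm{Pic}(X)$ is surjective with kernel precisely the invariant principal divisors, producing the isomorphism $c_1$.

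The real content lies in the commutativity, which amounts to the identity $c_1(\mathscr{O}(D))\frown[X]=\vartheta(D)$ in $H_{2n-2}(X,\mathbb{Z})$ for every invariant Cartier divisor $D$. On the smooth locus $X_{reg}$ this is classical Poincar\'e duality for divisor classes, so the equality holds after restriction. The main obstacle, and the true heart of the argument, is to propagate this across the singular set of $X$, where integral Poincar\'e duality may fail. Following \cite{BBFK}, I would pass to $T$-equivariant cohomology and employ the equivariant cycle map $\mathrm{CH}^{*}_{T}(X)\to H^{*}_{T}(X)$ on the one hand, and on the other hand use that the open inclusion $X_{reg}\hookrightarrow X$ induces an isomorphism on $H^2$ because its complement has real codimension at least four; combining these lets the smooth-locus identity lift canonically to $X$, establishing commutativity and completing the diagram.
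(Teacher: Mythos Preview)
The paper does not supply a proof of this theorem: it is stated with the citation \cite{BBFK} and used as a black box, so there is nothing in the text to compare your argument against. Your outline is a reasonable summary of the standard toric approach (orbit stratification for $H_{2n-2}$, the exponential sequence and $H^{>0}(X,\mathscr{O}_X)=0$ for $H^2$, and a cycle-map argument for commutativity), and it is in the spirit of what \cite{BBFK} actually does.

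One point in your sketch deserves more care if you intend it as a self-contained proof rather than a pointer to \cite{BBFK}. The step where you claim that $X_{reg}\hookrightarrow X$ induces an isomorphism on $H^2$ ``because its complement has real codimension at least four'' is not automatic on a singular space: the usual Lefschetz-type excision arguments rely on some form of duality on $X$, which is precisely what is in question. In \cite{BBFK} this is handled via the equivariant machinery and the explicit combinatorial description of $H^*_T(X)$ and $H_*^T(X)$, not by a naive codimension count; your parenthetical ``following \cite{BBFK}'' is doing real work here, and the sentence as written would not stand on its own.
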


When $X$ is simplicial we have  $ Pic(X)_{\mathbb{Q}}\simeq
\mathcal{C}^{\mathbb{T}}(X)=\mathcal{W}^{\mathbb{T}}(X)$. Using
these identifications and temporizing the diagram of the theorem
\ref{brasselet} by $\mathbb{Q}$ we have
$$
\begin{array}{ccc}
  Pic(X)\otimes\mathbb{Q} & \stackrel{\simeq}{\longrightarrow}& \mathcal{C}^{\mathbb{T}}(X)\otimes\mathbb{Q} \\
  \downarrow_{\simeq} &  &  \downarrow_{\simeq}\\
  H^2(X,\mathbb{Q}) & \stackrel{\frown[X]}{\longrightarrow} & H_{2n-2}(X,\mathbb{Q})
\end{array}
$$

Let $X$ be a complete simplicial toric variety of two dimension and
let $D$ be a $\mathbb{Q}$-Cartier divisor on $X$ . Then from theorem
\ref{brasselet} we conclude that $c_1(\mathscr{O}(D))$ is the
Poincaré's dual of the cycle represented by $D$. Therefore, we can
extend the intersection theory for rational coefficients. For
instance, let $D_1,D_2\in\mathcal{W}^{\mathbb{T}}(X)$ the
intersection number is the rational number
$$
D_1\cdot D_2=\langle c_1(\mathscr{O}(D_1))\cap
c_1(\mathscr{O}(D_2)),[X]\rangle\in \mathbb{Q}
$$
as well as in the case with integer coefficients.

We will use the  Poincaré-Satake duality to express the number of
intersection in terms of the orbifold integral.

\begin{prop}\label{prop2}
Let $X$ be a simplicial compact toric variety and $L_1,L_2\in
Pic(X)\otimes\mathbb{Q}$. Then
$$
L_1\cdot L_2=\displaystyle\int_{X}^{orb}c_1(L_1)\wedge c_1(L_2).
$$
\end{prop}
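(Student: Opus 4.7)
The plan is to reduce the statement to a computation on $\mathbb{T}$-invariant representatives and then invoke Poincaré--Satake duality for the orbifold $X$. Since $X$ is simplicial, Theorem \ref{brasselet} gives the chain of identifications $Pic(X)\otimes\mathbb{Q}\simeq \mathcal{C}^{\mathbb{T}}(X)\otimes\mathbb{Q}=\mathcal{W}^{\mathbb{T}}(X)\otimes\mathbb{Q}$, so I can choose $\mathbb{T}$-invariant $\mathbb{Q}$-Cartier divisors $D_1,D_2$ with $L_i\simeq \mathscr{O}(D_i)$. By the very definition of the intersection number recalled in the text,
$$
L_1\cdot L_2\;=\;\langle c_1(\mathscr{O}(D_1))\cup c_1(\mathscr{O}(D_2)),[X]\rangle.
$$

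Next I would identify this pairing with the orbifold integral. The fundamental class $[X]\in H_{2n}(X,\mathbb{Q})$ of the simplicial toric orbifold is represented, via the uniformizing atlas $\{(\mathcal{U}_i,G_i,\pi_i)\}$, by the weighted sum of chains with coefficient $1/|G_i|$; this is precisely the class whose evaluation on a top de Rham form $\omega$ is
$$
\langle[\omega],[X]\rangle\;=\;\sum_{i}\frac{1}{|G_i|}\int_{\mathcal{U}_i}\pi_i^{*}\omega\;=\;\int_{X}^{orb}\omega .
$$
Applied to $\omega=c_1(L_1)\wedge c_1(L_2)$ this gives the desired formula, provided one knows that the de Rham class of $c_1(L_i)$ obtained as curvature of an orbifold Hermitian connection is the same cohomology class $c_1(L_i)\in H^2(X,\mathbb{Q})$ that appears in Theorem \ref{brasselet}. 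This identification is exactly the Chern--Weil description of the first Chern class for $\mathbb{Q}$-line bundles on orbifolds: a positive integer multiple $mL_i$ is a genuine line bundle (because $mD_i$ becomes Cartier for $m$ divisible enough), its Chern class is computed by ordinary Chern--Weil, and the relation $c_1(mL_i)=m\,c_1(L_i)$ on both sides of the square in Theorem \ref{brasselet} lets one divide by $m^2$ to recover the equality for $L_1,L_2$ themselves, using bilinearity of both the intersection pairing and of $\int_X^{orb}$.

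The bulk of the argument is thus a book-keeping verification that the two natural candidates for the pairing $H^2\times H^2\to \mathbb{Q}$ agree: the algebraic one, computed via pulling $D_1,D_2$ back to a toric resolution $\pi\colon \widetilde{X}\to X$ and using $\pi^{*}D_i$ in $\widetilde{X}$, and the analytic one, given by the orbifold integral against the orbifold fundamental class. The key sanity check is the basic case of $X=\mathbb{P}(\varpi)$ with $L_1=\cdots=L_n=\mathscr{O}_{\mathbb{P}(\varpi)}(1)$: one has $\mathscr{O}_{\mathbb{P}(\varpi)}(1)^n=\frac{1}{\varpi_0\cdots\varpi_n}$ combinatorially (each hyperplane class is represented by a $\mathbb{Q}$-divisor with denominators $\varpi_i$), and Proposition \ref{prop1} already verifies that
$$
\int_{\mathbb{P}(\varpi)}^{orb}c_1(\mathscr{O}_{\mathbb{P}(\varpi)}(1))^{n}=\frac{1}{\varpi_0\cdots\varpi_n},
$$
which is the template for the general equality.

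The hardest step will be the compatibility of the two $c_1$'s, i.e.\ showing that the sheaf-theoretic $c_1$ used in Theorem \ref{brasselet} coincides, after $\otimes\mathbb{Q}$, with the orbifold curvature class appearing in the integrand. Once this compatibility is in hand, the rest is formal: the pairing $\langle\,\cdot\cup\cdot\,,[X]\rangle$ on $H^2(X,\mathbb{Q})\times H^2(X,\mathbb{Q})$ and the bilinear form $(L_1,L_2)\mapsto \int_X^{orb}c_1(L_1)\wedge c_1(L_2)$ are both bilinear, both agree on an integral basis (where classical Poincaré duality on a resolution and the covering degree formula for the orbifold integral yield the same value), and therefore agree on all of $Pic(X)\otimes\mathbb{Q}$.
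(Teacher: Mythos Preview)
Your approach is essentially the same as the paper's: both reduce the identity to Satake's Poincar\'e duality for orbifolds, pairing $c_1(L_1)\cup c_1(L_2)$ against the orbifold fundamental class and identifying that pairing with $\int_X^{orb}$. The paper's proof is in fact much terser than yours---it simply invokes Cox's theorem to get the orbifold structure and then Satake's duality from \cite{S} without discussing the Chern--Weil compatibility or passing to $\mathbb{T}$-invariant representatives---so your additional care about matching the sheaf-theoretic and curvature first Chern classes is more scrupulous than what the paper itself provides.
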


\begin{proof}
Since $X$ be a simplicial compact toric variety, then follows from
theorem \ref{cox} that $X$ is a compact complex orbifold. Let
$H^{i}_{DR}(X)$ be the cohomology group of $i$-forms on $X$ (in
orbifold's sence). We have the following Poincaré's duality for
orbifold showed by Satake in \cite{S} which can be read as
$$
\begin{array}{ccc}
  H^{i}_{DR}(X)\otimes H^{n-i}_{DR}(X)&\longrightarrow &\mathbb{Q} \\
   \alpha\wedge\eta&\longmapsto&\displaystyle\int_{X}^{orb}\alpha\wedge\eta.
\end{array}
$$
It follows Form this duality that
$$
L_1\cdot L_2=\langle c_1(\mathscr{O}(L_1))\cap
c_1(\mathscr{O}(L_2)),[X]\rangle=\displaystyle\int_{X}^{orb}c_1(L_1)\wedge
c_1(L_2).
$$
\end{proof}
Therefore, if $D_1,D_2\in\mathcal{W}^{\mathbb{T}}(X)$  we shall have
the following formula
$$
D_1\cdot D_2= \displaystyle\int_{X}^{orb}c_1(\mathscr{O}(D_1))\wedge
c_1(\mathscr{O}(D_2)).
$$

In general, if $X$ is a compact orbifold and $L_1,L_2\in
Pic(X)\otimes\mathbb{Q}$ we can define the intersection number using
this formula.

\begin{exe}
Let $D_1\in H^0(
\mathbb{P}(\varpi_0,\varpi_1,\varpi_2),\mathscr{O}(d_1))$ and
$D_2\in H^0(
\mathbb{P}(\varpi_0,\varpi_1,\varpi_2),\mathscr{O}(d_2))$ be.
Follows from proposition \ref{prop1} and \ref{prop2} that
$$
D_1\cdot D_2= \displaystyle\int_{X}^{orb}c_1(\mathscr{O}(d_1))\wedge
c_1(\mathscr{O}(d_2))=\displaystyle\int_{X}^{orb}(d_1d_2)\cdot
c_1(\mathscr{O}(1))^2=\frac{d_1d_2}{\varpi_0\varpi_1\varpi_2}.
$$
\end{exe}

\begin{obs}
Let $\{\mathbb{B}^2(0,\epsilon_p),G_p,\pi_p \}$ be  an uniformizing
system of $X$ and $D_1,D_2\in \mathcal{C}(X)\otimes\mathbb{Q}$
$\mathbb{Q}$-Cartier divisors that have no common component. Let
$x\in D_1\cap D_2$, $f$ and $g$ the lifting of the local equations
for $D_1$ and $D_2$, respectively. The orbifold intersection
multiplicities is defined
$$
i_{x}^{orb}(D_1,D_2)=\frac{1}{|G_x|}\cdot
dim_{\mathbb{C}}\frac{\mathcal{O}_0^{G_x}}{\langle f,g\rangle},
$$
where $\mathcal{O}_0^{G_x}\subset \mathcal{O}_0$ is the sub-algebra
local of the functions $G_x$-invariant and $|G_x|$ is the order of
the group $G_p$. With the same arguments used  in the case smooth we
can show that
$$
D_1\cdot D_2=\sum_{x\in D_1\cap D_2}i_{x}^{orb}(D_1,D_2).
$$
\end{obs}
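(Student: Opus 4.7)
My plan is to derive this identity from the global intersection formula of Proposition~\ref{prop2} by localizing the orbifold integral about each intersection point and invoking classical smooth intersection theory within the uniformizing charts. First I would use Proposition~\ref{prop2} to write
$$D_1 \cdot D_2 = \displaystyle\int_X^{orb} c_1(\mathscr{O}(D_1)) \wedge c_1(\mathscr{O}(D_2)).$$
Since $D_1$ and $D_2$ share no common component and $X$ is compact, $D_1 \cap D_2$ is a finite set. I would fix an orbifold atlas so that each $x \in D_1 \cap D_2$ lies in a single uniformizing chart $(\mathbb{B}^2(0,\epsilon_x), G_x, \pi_x)$ centered at $x$, and introduce a partition of unity supported on these charts. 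By the definition of the orbifold integral, the local contribution at $x$ appears with the weight $1/|G_x|$.

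Next, in each such chart, the lifted divisors $\tilde{D}_i = \pi_x^{-1}(D_i)$ are effective Cartier divisors on a smooth ball, defined by $G_x$-invariant holomorphic functions $f, g \in \mathcal{O}_0^{G_x}$. The Poincar\'e--Lelong formula represents $c_1(\mathscr{O}(\tilde{D}_i))$ by the current of integration along $\tilde{D}_i$, and the classical smooth-surface intersection theory identifies the resulting local integral with a length of a coordinate ring quotient, namely $\dim_\mathbb{C} \mathcal{O}_0/\langle f,g\rangle$. A Reynolds averaging argument, exploiting the $G_x$-equivariance of the quotient ring $\mathcal{O}_0/\langle f,g\rangle$, then converts this length into the invariant-ring dimension $\dim_\mathbb{C} \mathcal{O}_0^{G_x}/\langle f,g\rangle$ appearing in $i_x^{orb}$. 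Summing over $x$ produces the claimed identity.

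The main obstacle is precisely this last step: the algebraic passage from $\dim_\mathbb{C} \mathcal{O}_0/\langle f,g\rangle$, handed to us by Poincar\'e--Lelong on the smooth chart, to the invariant-ring dimension $\dim_\mathbb{C} \mathcal{O}_0^{G_x}/\langle f,g\rangle$ is delicate whenever $\mathcal{O}_0$ fails to be a free $\mathcal{O}_0^{G_x}$-module of rank $|G_x|$, which is the generic situation outside the pseudo-reflection case. For the diagonal cyclic actions that occur at the singular points of $\mathbb{P}(\varpi)$ one can control this directly by exploiting the monomial structure of $\mathcal{O}_0^{G_x}$, but a general statement would require either an additional freeness hypothesis on the local action or an interpretation of the target dimension as an equivariant Euler characteristic so that multiplicative factors in $|G_x|$ line up with the orbifold integral normalization.
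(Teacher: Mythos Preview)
The paper does not actually supply a proof of this remark: the entire argument is the single phrase ``with the same arguments used in the case smooth we can show that\ldots''. So there is no detailed proof to compare against; your outline is an attempt to unpack what the author leaves implicit. The route you describe---start from the orbifold integral of Proposition~\ref{prop2}, localize with a partition of unity subordinate to uniformizing charts, and then apply the classical local intersection formula on each smooth chart---is precisely the natural reading of ``same arguments as in the smooth case'', and in that sense you are faithful to the paper's intent, only more explicit.

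The obstacle you raise in your last paragraph is a genuine subtlety that the paper glosses over entirely. The orbifold integral, after lifting to $\mathbb{B}^2(0,\epsilon_x)$, hands you $\frac{1}{|G_x|}\dim_{\mathbb{C}}\mathcal{O}_0/\langle f,g\rangle$, whereas the remark's definition of $i_x^{orb}$ involves the invariant subring $\mathcal{O}_0^{G_x}$. These two quantities need not coincide: already for the diagonal $\mathbb{Z}/2$-action $(z,w)\mapsto(-z,-w)$ with $f=z^2$, $g=w^2$ one gets $\dim\mathcal{O}_0/\langle f,g\rangle=4$ but $\dim\mathcal{O}_0^{G}/\langle f,g\rangle=2$, so the two candidate local indices differ. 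In other words, either the definition in the remark should use $\mathcal{O}_0$ rather than $\mathcal{O}_0^{G_x}$, or an additional argument (of the equivariant type you sketch) is needed to reconcile them; the paper does not address this, and your hesitation is warranted rather than a defect of your approach.
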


\section{Foliations on cyclic orbifold surfaces }

This section follows the ideas of M. Brunella in \cite{B2}. Let $X$
be complex surface with isolated cyclic quotient singularities
(orbifold), i.e, around each $p\in Sing(X)$ the surface is of the
type $\mathbb{B}^2(0,\epsilon_{p})/\mathcal{G}_{p}^{k}$ , where
$\mathbb{B}^2(0,\epsilon_{p})$ is a ball in $\mathbb{C}^2$ and
$\mathcal{G}_{p}^{k}$ is the cyclic group of order $k$ generated by
$$
\gamma_{p}^{k}(z,w) =(e^{\frac{2\pi i}{k}}z,e^{\frac{2\pi i}{k}h}w)
$$
for suitable coprime positive integers $k, h$ with $0 < h < k$. Let
 $
Sing(X)=\{p_1,\dots,p_s\}$ be and $k_i$ the order of $p_i$,
respectively.

\begin{exe}
Let  $\mathbb{P}(\varpi_0,\varpi_1,\varpi_2)$ be the weighted
projective space and $\mu_{\varpi_i}\subset \mathbb{C}^*$ the
subgroup of $\varpi_i$-th roots of unity. We saw in the section
 that $\{\mathbb{C}^2,\mu_{\varpi_i},\pi\circ\phi_i \}_{i=0}^{2}$ is an
2-dimensional uniformizing system of
$\mathbb{P}(\varpi_0,\varpi_1,\varpi_2)$. That is, the weighted
projective plane $\mathbb{P}(\varpi_0,\varpi_1,\varpi_2)$ is a
cyclic orbifold surface.
\end{exe}

A foliation $\F$ on X is given by an open covering
$\{\mathcal{U}_i\}_{i\in \Lambda}$ of X and holomorphic vector
fields $\vartheta_i\in H^0(\mathcal{U}_i,TX_{|\mathcal{U}_i})$ with
isolated zeroes such that
 $$Sing(\vartheta_i)\cap Sing(X)=\emptyset$$
 and
$\vartheta_i=g_{ij}\cdot\vartheta_j$  on
 $\mathcal{U}_i\cap\mathcal{U}_j$
for some non-vanishing holomorphic functions $g_{ij}\in
\mathcal{O}^*(\mathcal{U}_i\cap\mathcal{U}_j)$ . Therefore, the
singular set of $\F$ is the discrete subset of
$X\backslash\{Sing(X)\}$ defined by
$$
Sing(\F)=\bigcup_{i\in \Lambda} Sing(\vartheta_i).
$$

Since $Sing(\F)\cap Sing(X)=\emptyset$, the leaf of $\F$ through
$p_i\in Sing(X)$ is an orbifold in which $p_i$ is affected by the
multiplicity $k_i$. This local leaf on
$\mathbb{B}^2(0,\epsilon_{i})/\mathcal{G}_{i}^{k_i}$ is of the form
$\mathbb{D}(0,\delta_i)/\mathcal{G}_{i}^{k_i}$, where
$\mathbb{D}(0,\delta_i)\subset \mathbb{C}$ is a disk. In other
words, on $\mathbb{B}^2(0,\epsilon_{i})/\mathcal{G}_{i}^{k_i}$ the
foliation $\F$ is the quotient of the vertical or horizontal
foliation on $\mathbb{B}^2(0,\epsilon_{i})$, up to an equivariant
biholomorphism.

The functions $g_{ij}$ form a multiplicative cocycle and define a
holomorphic $\mathbb{Q}$-line bundle $K_{\F}$ on $X$, called
canonical bundle of $\F$. In fact, let $K_{\F}^{reg}$ be the
canonical bundle(or sheave) of $\F_{|_{X_{reg}}}$, where
$\F_{|_{X_{reg}}}$ is the restriction of foliation to smooth part of
$X$. Take the direct image of $K_{\F}^{reg}$ under the inclusion
$$i:X_{reg}=X \backslash Sing(X)\rightarrow X.$$
Hence we have the sheave $i_{*}K_{\F}^{reg}=K_{\F}$ on $X$ which is
not locally free at $p_i\in Sing(X)$, but its $\kappa$-power
$K_{\F}^{\otimes \kappa}$  is, where $\kappa=l.c.m(k_1,\dots,k_s)$,
i.e, $K_{\F}^{\otimes \kappa}$ is a line bundle. The vector field
$\frac{\partial}{\partial w}$ on $\mathbb{B}^2(0,\epsilon_{i})$ is
not $\mathcal{G}_{i}^{k_i}$-invariant but
$\left(\frac{\partial}{\partial w}\right)^{\otimes \kappa}$ is.
Indeed, since
$\mathcal{G}_{i}^{k_i}=\langle\gamma_{p_j}^{k_j}\rangle$ we have
$$
\gamma_{p_j}^{k_j}\cdot\left(\frac{\partial}{\partial
w}\right)^{\otimes \kappa}=\left(e^{\frac{2\pi
i}{k_j}h_j}\frac{\partial}{\partial
w}\right)\otimes\cdots\otimes\left(e^{\frac{2\pi
i}{k_j}h_j}\frac{\partial}{\partial w}\right)=e^{2\pi
i\widehat{k_j}h_j}\left(\frac{\partial}{\partial w}\right)^{\otimes
\kappa}=\left(\frac{\partial}{\partial w}\right)^{\otimes \kappa},
$$
where $\widehat{k_j}=l.c.m(k_1,\dots,k_s)/k_j\in \mathbb{Z}_{+}$.
Moreover, the relations $\vartheta_i^{\otimes \kappa}=g_{ij}^{
\kappa}\cdot\vartheta_j^{\otimes \kappa}$ allow us to construct a
global holomorphic section of line bundle $(TX\otimes
K_{\F})^{\otimes \kappa}$, i.e, a foliation $\F$ on $X$ is a global
section of $\mathbb{Q}$-line bundle $TX\otimes K_{\F}$. Therefore
the space of holomorphic foliations on $X$ is given by
$$H^0(X,TX\otimes K_{\F})$$

Also, we can define foliations using $1$-forms. That is, a foliation
can be defined by a collection of $1$-forms $\omega_j\in
\Omega_{X}^1(\mathcal{U}_j)$ with isolated zeros and such that

\begin{center}
$\omega_i=f_{ij}\cdot \omega_j$\ \ \  on
$\mathcal{U}_i\cap\mathcal{U}_j$,\ \ \  $f_{ij}\in
\mathcal{O}_{X}^*(\mathcal{U}_i\cap\mathcal{U}_j)$.
\end{center}
Again, the functions $\{f_{ij}\}$ defines a $\mathbb{Q}$-line bundle
$N_{\F}$ on $X$ called normal bundle of $\F$. By contraction to
$2$-forms we have that $K_{X}\simeq Hom(K_{\F}^*,N_{\F})\simeq
K_{\F}\otimes N_{\F}^* $.

Let $S\subset X$ be a compact connected (possibly singular) curve,
and suppose that each irreducible component of C is not invariant by
F. For every $p\in S$ we can define an index $tang(\F,S, p)$ which
measure the tangency order of $\F$ with $S$ at $p$.

Take a neighborhood
$\mathbb{B}^2(0,\epsilon_{i})/\mathcal{G}_{i}^{k_i}$ and we lift
$\F_{|\mathbb{B}^2(0,\epsilon_{i})/\mathcal{G}_{i}^{k_i}}$ and
$S\cap \mathbb{B}^2(0,\epsilon_{i})/\mathcal{G}_{i}^{k_i}$ on
$\mathbb{B}^2(0,\epsilon_{i})$. Let $\vartheta$ be and $f$,
respectively, the vector field
 and the local equation on $\mathbb{B}^2(0,\epsilon_{i})$ that define the lifting of $\F$ and $S$. Then
 we define the index by
$$tang(\F,S, p_i) =\frac{1}{k_i}\cdot
dim_{\mathbb{C}}
\frac{\mathcal{O}_0^{\mathcal{G}_{i}^{k_i}}}{\langle f, \vartheta(f)
\rangle}
$$
where $\mathcal{O}_0^{\mathcal{G}_{i}^{k_i}}$ denotes the local
algebra of germs of functions $ \mathcal{G}_{i}^{k_i}$-invariants on
$\mathbb{B}^2(0,\epsilon_{i})$. We then have the following formula
$$
tang(\F,S)=K_{\F}\cdot S + S\cdot S,
$$
where $tang(\F,S)=\sum_{p\in S}tang(\F,S, p_i)$. See \cite{B1} and
\cite{B2}.

\subsection{ The orbifold Milnor number }

Let $\F$ be a foliation on $X$ and $p\in Sing(\F)$.Take an
uniformized chart of $p$ given by
$\{\mathbb{B}^2(0,\epsilon_p),G_p,\pi_p \}$. Let
$V=P\frac{\partial}{\partial x}+Q\frac{\partial}{\partial x}$ be the
vector field $G_p$-invariant on $\mathbb{B}^2(0,\epsilon_p)$ which
induce the lifting $\pi_p^*(\F)$ of the foliation $\F$. The
\emph{Orbifold Milnor's number} is defined by
$$
\mu_{p}^{orb}(\F)=\frac{1}{|G_p|}\cdot
dim_{\mathbb{C}}\frac{\mathcal{O}_0^{G_p}}{\langle P,Q\rangle},
$$
\\
where $|G_p|$ is the order of the group $G_p$. We say that $p\in
Sing(\F)$ is \emph{non-degenerated} if $\mu_{p}^{orb}(\F)=1$, i.e,
when $dim_{\mathbb{C}}\frac{\mathcal{O}_0^{G_p}}{\langle
P,Q\rangle}=|G_p|.$

\begin{prop}
Let $\F$ be a holomorphic foliation on $X$, then
$$
\sum_{x\in Sin(\F)}\mu_p^{orb}(\F)=K_{\F}\cdot K_{\F}+ K_{\F}\cdot
K_{X}+ \chi_{top}(X)-\sum_{p\in
Sing(X)}\left(1-\frac{1}{|G_p|}\right)
$$
\end{prop}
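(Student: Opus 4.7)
The strategy is to realize the foliation as a global holomorphic section of a rank-two $\mathbb{Q}$-orbibundle on $X$, apply an orbifold Poincar\'e--Hopf formula, and evaluate the resulting top Chern number using Proposition \ref{prop2} together with Satake's orbifold Gauss--Bonnet theorem.

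First, the local vector fields $\vartheta_i\in H^0(\mathcal{U}_i,TX|_{\mathcal{U}_i})$ with cocycle relation $\vartheta_i=g_{ij}\vartheta_j$ patch into a global holomorphic section $s\in H^0(X,\,TX\otimes K_{\F}^{*})$. Its zero locus is $Sing(\F)\subset X_{reg}$, so at every $p\in Sing(\F)$ the isotropy $G_p$ is trivial, and the orbifold local index of $s$ at $p$ coincides with the classical Milnor number
$$
\dim_{\mathbb{C}}\frac{\mathcal{O}_{X,p}}{\langle P,Q\rangle}=\mu_{p}^{orb}(\F).
$$

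Orbifold Poincar\'e--Hopf applied to $s$ then gives
$$
\sum_{p\in Sing(\F)}\mu_{p}^{orb}(\F)=\int_{X}^{orb}c_2\bigl(TX\otimes K_{\F}^{*}\bigr).
$$
The splitting identity $c_2(E\otimes L)=c_2(E)+c_1(E)\cdot c_1(L)+c_1(L)^{2}$, which is local and therefore valid on every uniformizing chart, combined with $c_1(TX)=-c_1(K_X)$ and $c_1(K_{\F}^{*})=-c_1(K_\F)$, yields
$$
c_2\bigl(TX\otimes K_{\F}^{*}\bigr)=c_2(TX)+c_1(K_X)\cdot c_1(K_\F)+c_1(K_\F)^{2}.
$$
Integrating and applying Proposition \ref{prop2} to the last two summands produces the contributions $K_X\cdot K_\F$ and $K_\F\cdot K_\F$. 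For the first term, Satake's orbifold Gauss--Bonnet gives
$$
\int_{X}^{orb}c_2(TX)=\chi_{top}(X)-\sum_{p\in Sing(X)}\!\left(1-\frac{1}{|G_p|}\right),
$$
the correction term encoding that each cyclic quotient point $p$ is weighted by $1/|G_p|$ in the orbifold Euler characteristic. Adding the three contributions yields the stated formula.

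The delicate step is the orbifold Poincar\'e--Hopf localization: one must show that the top orbifold Chern number of $TX\otimes K_{\F}^{*}$ distributes as a sum of orbifold local indices of $s$ at its zeros. Because all zeros lie in $X_{reg}$, the local contributions reduce to the classical Milnor numbers, and all nontriviality of the orbifold structure on $X$ is absorbed into the Satake correction for $\int^{orb}c_2(TX)$. This is the step that most requires care; granted it, the rest is the routine Chern class manipulation described above.
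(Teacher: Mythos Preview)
Your approach coincides with the paper's: realize the foliation as a global holomorphic section of $TX$ twisted by the canonical bundle of $\F$, localize the top Chern class at the singular points via an orbifold Poincar\'e--Hopf argument, expand $c_2$ by the splitting formula, and identify $\int^{orb}c_2(TX)$ with the orbifold Euler characteristic $\chi_{top}(X)-\sum_{p\in Sing(X)}(1-1/|G_p|)$ via Satake. One caveat: in the paper's convention the section lives in $TX\otimes K_{\F}$, not $TX\otimes K_{\F}^{*}$ (this is stated explicitly in the paragraph preceding the proposition), so you should reconcile the twist---and hence the sign of the cross term $K_{\F}\cdot K_X$---with that convention.
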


\begin{proof}
The proof  follows in the same lines in the case smooth, and this
one is proved in \cite{B1}. That is, we must to calculate the Chern
class $c_2(TX\otimes K_{\F})$. Therefore, we get
$$
\sum_{x\in Sin(\F)}\mu_p^{orb}(\F)=K_{\F}\cdot K_{\F}+ K_{\F}\cdot
K_{X}+ c_2(TX\otimes K_{\F}).
$$
On the other hand, $c_2(TX\otimes
K_{\F})=\chi_{orb}(X)=\chi_{top}(X)-\sum_{p\in
Sing(X)}\left(1-\frac{1}{|G_p|}\right)$.
\end{proof}

\section{Foliations on weighted projective space}
A vector fields on $\mathbb{C}^{n+1}$ is called quasi-homogeneous of
type $(\varpi_0,\ldots,\varpi_n)$ and degree  $d$ if is of the form
$$
X=\sum_{i=0}^{n}P_i\dfrac{\partial}{\partial z_i},
$$
where
$P_i(\lambda^{\varpi_0}z_0,\ldots,\lambda^{\varpi_n}z_n)=\lambda^{\varpi_i+d-1}P_i(z_0,\ldots,z_2)$,
for all $i=0,\dots,n.$ The foliation $\F_{X}$ induced by these
vector fields is called quasi-homogeneous foliations. The linear
vector fields
$$
\mathcal{R}_{\varpi}=\sum_{i=0}^{n}\varpi_iz_i\dfrac{\partial}{\partial
z_i}
$$
is the infinitesimal generator of the $\mathbb{C}^*$-action on
$\mathbb{C}^{n+1}\backslash\{0\}$ given by
$\lambda\cdot(z_0,\cdots,z_n)=(\lambda^{\varpi_0}z_0,\ldots,\lambda^{\varpi_n}z_n)$.
Any quasi-homogeneous foliation $\F_{X}$ of type
$(\varpi_0,\ldots,\varpi_n)$ is preserved by this action. We shall
explain this. If $X$ has degree $d$ we have
$$\lambda\cdot X(z_0,\ldots,z_2)= \lambda^{1-d}X(\lambda\cdot(z_0,\ldots,z_n)),$$
that is, if $\mathcal{L}_p$ is the  leaf of $\F_{X}$ through p then
$\lambda\cdot \mathcal{L}_p= \mathcal{L}_{\lambda\cdot p}$. This
means that the flow of vector field $\mathcal{R}_{\varpi}$ takes
leaves of $\F_{X}$ onto leaves of $\F_{X}$, moreover, as you can see
$\lambda\cdot Sin(X)=Sing(X)$. Then we can conclude that
$\lambda\cdot \F_{X}=\F_{X}$. Observe that when $d=1$ occurs
$\lambda\cdot \mathcal{L}_p= \mathcal{L}_{p}$.

This say us that a quasi-homogeneous foliation $\F_{X}$ of type
$(\varpi_0,\ldots,\varpi_n)$ can be  projected on the foliation on
$\mathbb{P}(\varpi_0,\ldots,\varpi_n)$ via the projection map
$\pi:\mathbb{C}^3\backslash\{0\} \rightarrow
\mathbb{P}(\varpi_0,\ldots,\varpi_n)$. We shall see that all
foliations on $\mathbb{P}(\varpi_0,\ldots,\varpi_n)$ can be
represented in "homogeneous" coordinates by a vector field
quasi-homogeneous of type $(\varpi_0,\ldots,\varpi_n)$ and some
degree $d$.

\begin{defi}
A holomorphic foliation $\F$ on $\mathbb{P}(\varpi)$ of degree $d$
is a global holomorphic section of line orbibundle
$T\mathbb{P}(\varpi)\otimes \mathscr{O}(d-1)$.
\end{defi}
Tensorizing the Euler's sequence by
$\mathscr{O}_{\mathbb{P}(\varpi)}(d-1)$ we shall have
$$
0\longrightarrow
\mathscr{O}_{\mathbb{P}(\varpi)}(d-1)\longrightarrow
\bigoplus_{i=0}^{n}\mathscr{O}_{\mathbb{P}(\varpi)}(d+\varpi_i-1)
\longrightarrow T\mathbb{P}(\varpi)\otimes \mathscr{O}(d-1),
\longrightarrow 0
$$
therefore the space of holomorphic foliations on
$\mathbb{P}(\varpi)$ is
$$\mathbb{P}H^0(\mathbb{P}(\varpi),T\mathbb{P}(\varpi)\otimes
\mathscr{O}_{\mathbb{P}(\varpi)}(d-1))\simeq
\mathbb{P}H^0\left(\mathbb{P}(\varpi),\frac{\bigoplus_{i=0}^{n}\mathscr{O}_{\mathbb{P}(\varpi)}(d+\varpi_i-1)}
{\mathcal{R}_\varpi\cdot\mathscr{O}_{\mathbb{P}(\varpi)}(d-1)}\right).
$$
Then a foliation of degree $d$ on $\mathbb{P}(\varpi)$ can be
represented in homogeneous coordinates by a vector field
quasi-homogeneous of type $(\varpi_0,\ldots,\varpi_n)$ and degree
$d$, modulo addition of a vector field of the form
$g\cdot\mathcal{R}_\varpi$, where $g$ is a quasi-homogeneous
polynomial of degree $d-1$.

The degree of a foliation $\F$ on
$\mathbb{P}(\varpi_0,\varpi_0,\varpi_2)$ is a number of tangency of
$\F$ with a generic element of the linear system $\ell\in
H^0(\mathbb{P}(\varpi_0,\varpi_0,\varpi_2),\mathscr{O}(\varpi_0,\varpi_0,\varpi_2))$.
In fact, we have that
$$
Tang(\F,h)=K_{\F}\cdot \ell+\ell\cdot
\ell=\frac{(\varpi_0\varpi_0\varpi_2)(d-1)}{\varpi_0\varpi_0\varpi_2}+\frac{\varpi_0\varpi_0\varpi_2}{\varpi_0\varpi_0\varpi_2}=d.
$$

\subsection{Foliations on  $\mathbb{P}(\varpi_0,\varpi_1,\varpi_2)$
given by $1$-forms}

Let $|\varpi|=\varpi_0+\varpi_1+\varpi_2$ be. A foliation $\F$ on
$\mathbb{P}(\varpi)$ can be given by a section $\Omega\in
\Omega_{\mathbb{P}(\varpi)}^1\otimes N_{\F}$. Since
$K_{\mathbb{P}(\varpi)}=K_{\F}\otimes N^*_{\F}$,
$K_{\F}=\mathcal{O}_{\mathbb{P}(\varpi)}(d-1)$ and
$K_{\mathbb{P}(\varpi)}=\mathcal{O}_{\mathbb{P}(\varpi)}(-|\varpi|)$,
we have that
$N_{\F}=\mathcal{O}_{\mathbb{P}(\varpi)}(d+|\varpi|-1)$. Now,
tensorizing the dual Euler's sequence by
$\mathcal{O}_{\mathbb{P}(\varpi)}(d+|\varpi|-1)$ we get the exact
sequence
$$
0\rightarrow
\Omega_{\mathbb{P}(\varpi)}^1\otimes\mathcal{O}_{\mathbb{P}(\varpi)}(d+|\varpi|-1)
\rightarrow\bigoplus_{i=0}^{n}\mathscr{O}_{\mathbb{P}(\varpi)}(d+|\varpi|-\varpi_i-1)\stackrel{i_{\mathcal{R}_{\varpi}}}{\rightarrow}
 \mathcal{O}_{\mathbb{P}(\varpi)}(d+|\varpi|-1)\rightarrow 0.
$$
Therefore a foliation can be given in homogeneous coordinate by
$1$-form $$\Omega=A_0dz_0+A_1dz_1+A_2dz_2,$$ where $A_i$ is a
quasi-homogeneous polynomial of the type
$(\varpi_0,\varpi_1,\varpi_2)$ and degree $d+|\varpi|-\varpi_i-1,$
and $i_{\mathcal{R}_{\varpi}}(\Omega)=0$, i.e,
$\varpi_0A_0z_0+\varpi_1A_1z_1+\varpi_2A_2z_2=0$.

\begin{exe}Let $f$ and $g$ quasi-homogeneous polynomial of the type
$(\varpi_0,\varpi_1,\varpi_2)$ and degree $d_1$ and $d_2$,
respectively. The $1$-form
 $\Omega(f,g)=d_1fdg-d_2gdf$ defines a foliation $\F$ on
 $\mathbb{P}(\varpi)$ of degree $d_1+d_2-|\varpi|$. Moreover, the
 rational function $f^{d_1}/g^{d_2}$ is a first integral for $\F$.
\end{exe}

\begin{exe}(logarithmic foliations )
\\
Let $f_1,\dots, f_k$ quasi-homogeneous polynomial of the type
$(\varpi_0,\varpi_1,\varpi_2)$ and degrees $d_1,\dots,d_k$,
respectively, with $k\geq3$. Let $\lambda_1,\dots,\lambda_k\in
\mathbb{C}^*$ be,  such that $\sum_{=1}^{k}\lambda_id_i=0$. Define
the $1$-form given by
$$
\Omega=(f_1\cdots f_k)\cdot\sum_{=1}^{k}\lambda_i\frac{df_i}{f_i}.
$$
Follows from the Euler's formula that
$i_{\mathcal{R}_{\varpi}}(\Omega)=(f_1\cdots
f_k)\cdot\left(\sum_{=1}^{k}\lambda_id_i\right)=0$. Therefore,
$\Omega$ define a foliation on $\mathbb{P}(\varpi)$ of degree
$\sum_{=1}^{k}d_i-|\varpi|$.
\end{exe}

\section{The number of singularities with multiplicities}

We have the following formula for the number of singularities for a
foliation on $\mathbb{P}(\varpi_0,\dots,\varpi_n)$.

\begin{teo}\label{1}
Let $\F$ a foliation on  $\mathbb{P}(\varpi_0,\dots,\varpi_n)$ with
isolated singularities. Then
$$
(\varpi_0,\cdots,\varpi_n)\cdot\sum_{p\in
Sin(\F)}\mu_p^{orb}(\F)=\sum_{i=0}^{n}\left[\sum_{k=0}^{i}(-1)^{i-k}\sigma_{n-i}(\varpi_0,\dots,\varpi_n)d^k\right],
$$
where $\sigma_j$ is the  $j$-th elementary symmetric function.
\end{teo}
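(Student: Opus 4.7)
The strategy is to recognize $\sum_{p}\mu_p^{orb}(\F)$ as an orbifold Chern number and compute it using the Chern classes of the orbibundle $T\mathbb{P}(\varpi)\otimes\mathscr{O}_{\mathbb{P}(\varpi)}(d-1)$, of which $\F$ is a global holomorphic section. Since $Sing(\F)$ is disjoint from $Sing(\mathbb{P}(\varpi))$ (by the definition of foliation adopted in Section~4) and consists of isolated points, the orbifold Poincar\'e-Hopf/Baum-Bott formula---the $n$-dimensional analogue of the Proposition of Section~4, proved by the same argument of counting zeros of a section with the top Chern class---gives
$$
\sum_{p\in Sing(\F)}\mu_p^{orb}(\F)=\displaystyle\int_{\mathbb{P}(\varpi)}^{orb}c_n\bigl(T\mathbb{P}(\varpi)\otimes\mathscr{O}_{\mathbb{P}(\varpi)}(d-1)\bigr).
$$

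Next, I would compute this top Chern class via the Euler sequence tensored by $\mathscr{O}(d-1)$:
$$
0\to\mathscr{O}_{\mathbb{P}(\varpi)}(d-1)\to\bigoplus_{i=0}^{n}\mathscr{O}_{\mathbb{P}(\varpi)}(\varpi_i+d-1)\to T\mathbb{P}(\varpi)\otimes\mathscr{O}_{\mathbb{P}(\varpi)}(d-1)\to 0.
$$
Setting $h=c_1(\mathscr{O}_{\mathbb{P}(\varpi)}(1))$ and using multiplicativity of the total Chern class in short exact sequences, I obtain
$$
\bigl(1+(d-1)h\bigr)\,c\bigl(T\mathbb{P}(\varpi)\otimes\mathscr{O}(d-1)\bigr)=\prod_{i=0}^{n}\bigl(1+(\varpi_i+d-1)h\bigr)
$$
as a polynomial identity modulo $h^{n+1}$. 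Solving the resulting recursion for the coefficient of $h^n$ and combining with the evaluation $\int^{orb}h^n=1/(\varpi_0\cdots\varpi_n)$ from Proposition~\ref{prop1} produces
$$
(\varpi_0\cdots\varpi_n)\sum_{p\in Sing(\F)}\mu_p^{orb}(\F)=\sum_{k=0}^{n}\bigl(-(d-1)\bigr)^{n-k}\sigma_k\bigl(\varpi_0+d-1,\dots,\varpi_n+d-1\bigr).
$$

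The final step is purely combinatorial: expand each shifted elementary symmetric function by the identity $\sigma_k(x_0+a,\dots,x_n+a)=\sum_{j=0}^{k}\binom{n+1-j}{k-j}a^{k-j}\sigma_j(x)$, interchange the order of summation, and collapse the inner sums using the elementary binomial identity $\sum_{m=0}^{N-1}(-1)^{N-1-m}\binom{N}{m}=1$, valid for $N\geq 1$. A final expansion of $(d-1)^{n-j}$ in powers of $d$ followed by the change of index $i=n-j$ places the expression into the stated form. I expect this index-juggling to be the main technical obstacle of the proof; the signs and binomial manipulations require care, but no further geometric input is needed beyond the Chern class computation above.
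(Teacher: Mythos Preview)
Your approach is essentially the same as the paper's: both identify $\sum_p\mu_p^{orb}(\F)$ with $\int_{\mathbb{P}(\varpi)}^{orb}c_n(T\mathbb{P}(\varpi)\otimes\mathscr{O}(d-1))$ via the orbifold Poincar\'e--Hopf principle, compute the Chern class through the Euler sequence, and evaluate using $\int^{orb}h^n=1/(\varpi_0\cdots\varpi_n)$. The only difference is bookkeeping: the paper works with the \emph{untensored} Euler sequence to obtain $c_i(T\mathbb{P}(\varpi))=\sigma_i(\varpi)h^i$ and then applies the standard identity $c_n(E\otimes L)=\sum_{i}c_i(E)c_1(L)^{n-i}$, which lands directly on $\sum_i\sigma_{n-i}(\varpi)(d-1)^i$ and avoids your shifted symmetric functions and the ensuing combinatorial clean-up entirely.
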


\begin{proof}
From the Euler's sequence we have that
$$c(T\mathbb{P}(\varpi_0,\dots,\varpi_n))=\prod_{i=0}^n(1+c_1(\mathscr{O}_{\mathbb{P}(\varpi)}(\varpi_i))).$$
Let $c_1(\mathscr{O}_{\mathbb{P}(\varpi)}(1))=h$ be, then $
c_i(\mathbb{P}(\varpi_0,\varpi_1,\varpi_2)=\sigma_i(\varpi_0,\varpi_1,\varpi_2)\cdot
h^i, $ where $\sigma_i$ is the  $i$-th elementary symmetric
function. Let $\vartheta \in
T\mathbb{P}(\varpi_0,\dots,\varpi_n)\otimes\mathscr{O}(d-1)$ be. It
follows  from intersection theory \cite{F} that the Chern class
$c_{n}(T\mathbb{P}(\varpi_0,\dots,\varpi_n)\otimes\mathscr{O}(d-1))$
is the intersection product between the graph of $\vartheta$ and the
graph of the null section, and each singularity $p\in Sing(\F)$
gives a contribution equal to $\mu_p^{orb}(\F)$. Hence, we get
$$
\begin{array}{ccl}
  \displaystyle\sum_{p\in Sin(\F)}\mu_p^{orb}(\F)&=&\displaystyle\int\limits_{\mathbb{P}(\varpi_0,\dots,\varpi_n)}^{orb}c_{n}(T\mathbb{P}(\varpi_0,\dots,\varpi_n)\otimes
  \mathscr{O}(d-1))\\
  \\
   & = &\dfrac{\sum_{i=0}^{n}\left[\sum_{k=0}^{i}(-1)^{i-k}\sigma_{n-i}(\varpi_0,\dots,\varpi_n)d^k\right]}{\varpi_0\cdots\varpi_n}.
\end{array}
$$

\end{proof}

\section{ Extatic hypersurface}

Let $F,G\in
H^0(\mathbb{P}(\varpi),\mathscr{O}_{\mathbb{P}(\varpi)}(k))$ be. We
have that $\Theta(F,G)=\frac{F}{G}$ is a well defined rational
function on $\mathbb{P}(\varpi)$, i.e, it defines a rational
function $\Theta(F,G):
\mathbb{P}(\varpi_0,\dots,\varpi_n)\dashrightarrow \mathbb{P}$. We
say that a foliation $\F$ on $\mathbb{P}(\varpi)$ admits a rational
first integral of degree $k$ if there exist $F,G\in
H^0(\mathbb{P}(\varpi),\mathscr{O}_{\mathbb{P}(\varpi)}(k))$ such
that $X(\Theta(F,G))=0$, where $X$ is a vector fields that defines
$\F$ in homogeneous coordinates.

A finite dimensional linear system $V$ on
 $\mathbb{P}(\varpi)$ is the same as to consider a finite dimensional linear space of
quasi-homogeneous polynomials $V$ in the variable $z =
(z_0,\dots,z_n)$. Suppose now that $V$ is a finite dimensional
linear system and let $v_1,\dots,v_{\ell}\in
\mathbb{C}[z_0,\dots,z_n]$ be a basis of $V$ . Consider the matrix
 $$
  E(V,X)=\left(\begin{array}{cccc}
  v_1& v_2 & \cdots & v_{\ell}\\
  \\
   X(v_1) &X(v_2) & \cdots & X(v_{\ell})\\
   \\
  \vdots & \vdots & \ddots & \vdots\\
  \\
X^{\ell -1}(v_1) &X^{\ell -1}(v_2) & \cdots & X^{\ell -1}(v_{\ell})
\end{array}%
\right)
 .$$

The \emph{extactic of} $X$ \emph{associated to} $V$ is
$\mathcal{E}(V,X) = det\ E(V,X)$, and the \emph{extactic
hypersurface  of} $X$ associated to $V$ is the variety
$Z(\mathcal{E}(V,X))$. The concept of \emph{extactic divisor} of  on
a complex manifold and its properties was introduced by J.V.Pereira
[P].

The following result elucidate the role of the extactic variety.

\begin{prop}\label{ext}
Let $\F_{X}$ be a foliation on $\mathbb{P}(\varpi)$ induced in
homogeneous coordinate by a vector fields $X$. Consider a linear
system $V$ on $\mathbb{P}(\varpi)$ and $\{f=0\}$ a
$\F_{X}$-invariant hypersurface with $f\in V$. Then $f$ is a factor
of $\mathcal{E}(V,X)$. Moreover, $\F$  admit a rational integral
first if, and only if,  $\mathcal{E}(V,X)=0$.

\end{prop}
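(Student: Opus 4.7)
The argument splits naturally into the divisibility assertion and the biconditional for first integrals. For the first assertion, the plan is to derive a cofactor equation from the invariance hypothesis. The affine cone over $\{f=0\}\subset\mathbb{P}(\varpi)$ is preserved by the vector field $X$, so $X$ is tangent to it on the smooth locus and $X(f)$ must vanish along $\{f=0\}$. Applied to each irreducible factor of $f$ and combined through a logarithmic derivative computation, this produces a quasi-homogeneous polynomial cofactor $g$ with $X(f)=g\cdot f$. Arguing inductively on $j$ via the recursion $h_{j+1}=X(h_j)+g\,h_j$ starting from $h_0=1$, one obtains polynomials $h_j$ with $X^j(f)=h_j\cdot f$ for every $j\geq 0$.

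To conclude the first assertion, I would extend $f$ to a basis $v_1=f,v_2,\dots,v_\ell$ of $V$. Then the first column of $E(V,X)$ equals $f\cdot(1,g,h_2,\dots,h_{\ell-1})^T$, so $f$ divides $\det E(V,X)=\mathcal{E}(V,X)$. For an arbitrary $f\in V$ one uses the analogous change of basis: the matrix $E(V,X)$ is multiplied on the right by the (constant, invertible) change-of-basis matrix, so $\mathcal{E}(V,X)$ changes only by a nonzero scalar and the divisibility persists.

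For the biconditional, the easy implication is the following. Suppose $F/G$, with $F,G\in V$ of the same weighted degree, is a rational first integral of $\F_X$. Then every member of the pencil $\{aF+bG=0\}_{(a:b)\in\mathbb{P}^1}$ is $\F_X$-invariant, because its level sets are unions of leaves. By the first assertion each $aF+bG$ divides $\mathcal{E}(V,X)$, and since this gives infinitely many pairwise coprime irreducible factors of a polynomial of fixed degree, one concludes $\mathcal{E}(V,X)\equiv 0$.

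The main obstacle is the reverse implication: extracting a first integral from the vanishing of the extactic. The plan is to use that $\mathcal{E}(V,X)\equiv 0$ exhibits a nontrivial $\mathbb{C}(z)$-linear relation $\sum_{j=0}^{\ell-1}b_j(z)\,X^j(v)=0$ valid for every $v\in V$. Choosing such a relation of minimal order $r$ and exploiting the quasi-homogeneous bookkeeping (each iterate $X^j$ raises weighted degree by $j(d-1)$), one selects $F,G\in V$ with $G\,X(F)-F\,X(G)=0$, so that $F/G$ descends to a nonconstant rational function on $\mathbb{P}(\varpi)$ annihilated by $X$. This is the weighted projective counterpart of the argument in \cite{Pe}, and the delicate point is balancing the quasi-homogeneous degrees so that the extracted ratio is genuinely nonconstant and represents a well-defined element of the field of rational functions of $\mathbb{P}(\varpi)$.
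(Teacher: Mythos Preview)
The paper does not prove this proposition independently; it simply refers to Theorem~4.3 of \cite{C-L-P} and asserts that the same ideas carry over. Your proposal follows exactly that route, and your treatment of the divisibility claim and of the implication ``first integral $\Rightarrow\mathcal{E}(V,X)\equiv 0$'' is correct and more explicit than anything the paper provides.

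In the converse direction, however, the sentence ``one selects $F,G\in V$ with $G\,X(F)-F\,X(G)=0$'' hides the actual work, and your diagnosis of where the difficulty lies is off. Since $V$ is a linear subsystem of a single $H^0(\mathbb{P}(\varpi),\mathscr{O}(k))$, any two elements of $V$ already share the same weighted degree, so the ratio $F/G$ automatically descends to $\mathbb{P}(\varpi)$; there is no ``degree balancing'' to perform. What is genuinely needed is the Wronskian argument over the field $K\subset\mathbb{C}(z_0,\dots,z_n)$ of $X$-constants: a minimal relation $\sum_{j\le r}b_jX^j(v)=0$ with $r<\ell$ forces $\dim_K\langle v_1,\dots,v_\ell\rangle_K\le r$, while $\dim_{\mathbb{C}}\langle v_1,\dots,v_\ell\rangle_{\mathbb{C}}=\ell$, so $K\supsetneq\mathbb{C}$, and any nonconstant element of $K$ is a rational first integral. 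Tracing the $K$-linear dependence among the $v_i$ then yields $F,G\in V$ with $F/G\in K\setminus\mathbb{C}$. Without this step your converse is a restatement of the conclusion rather than an argument; the reference to \cite{Pe} is appropriate, but the ``delicate point'' you flag is not the one that matters.

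One further remark: the biconditional as literally stated in the paper is imprecise, since a foliation may admit a rational first integral of high degree while $\mathcal{E}(V,X)\not\equiv 0$ for a fixed low-degree $V$. Your implicit restriction to $F,G\in V$ is the correct reading, and only the direction $\mathcal{E}(V,X)=0\Rightarrow$ first integral is actually used in the proof of Theorem~\ref{2}.
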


\begin{proof}
This proposition it follows  using the same ideas of Theorem 4.3 of
\cite{C-L-P}, for the case of polynomial vector fields on
$\mathbb{C}^2.$
\end{proof}
If $f$ is a defining equation for an irreducible $\F_{X}$-invariant
hypersurface, such that $f\in V$ , its \emph{multiplicity} is
defined by the largest integer $m$ such that $f^{m}$ divides
$\mathcal{E}(V,X)$.

\section{Proofs}

\subsection{Proof of theorem \ref{2}}
 Let $X$ be a vector fields on $\mathbb{C}^{n+1}$ that defines $\F$
in homogeneous coordinates and
$V=H^0(\mathbb{P}(\varpi),\mathscr{O}_{\mathbb{P}(\varpi)}(k))$.
Since $\F$ does not admit a rational first integral then by
 proposition \ref{ext} we have that $\mathcal{E}(X,V)\neq 0$. We shall
determinate the degree of $\mathcal{E}(X,V)\neq 0$. Expanding the
determinant we get
$$
\mathcal{E}(X,V)=\displaystyle \sum_{\sigma\in S_{\eta}}
sgn(\sigma)X^{0}(\upsilon_{1\sigma(1)})X(\upsilon_{2\sigma(2)})\cdots
X^{\eta-1}(\upsilon_{\eta\sigma(\eta)}),
$$
where
$\eta=h^0(\mathbb{P}(\varpi),\mathscr{O}_{\mathbb{P}(\varpi)}(k))$
and $\{\upsilon_{i}\}_{i=1}^{\eta}$ is a base for $V$. We have that
$deg(X^{j}(\upsilon_{r}))=j(d-1)+k$, for all $j=0,\dots,\eta$.
Indeed, let $X=\sum_{i=0}^{n}P_i\frac{\partial}{\partial z_i}$ be.
Since $P_i$ is quasi-homogeneous of degree $d+\varpi_i-1$ and
$deg\left(\frac{\partial\upsilon_{r}}{\partial
z_i}\right)=k-\varpi_i$, we have that
$$
deg\left(P_i\cdot\frac{\partial \upsilon_{r}}{\partial
z_i}\right)=deg(P_i)+deg\left(\frac{\partial\upsilon_{r}}{\partial
z_i}\right)=d+\varpi_i-1+k-\varpi_i=d-1+k.
$$ Therefore, inductively
follows that $X^{j}(\upsilon_{r})=j(d-1)+k$. Hence
$$
\begin{array}{lcl}
   deg(E(X,V))& = & \displaystyle \sum_{j=0}^{\eta-1}j(d-1)+k=\displaystyle {\eta\choose
2}(d-1)+\eta\cdot k \\
\\
&=& \displaystyle
{h^0(\mathbb{P}(\varpi),\mathscr{O}_{\mathbb{P}(\varpi)}(k))\choose
2}(d-1)+h^0(\mathbb{P}(\varpi),\mathscr{O}_{\mathbb{P}(\varpi)}(k))\cdot
k.
\end{array}
$$
Let $\mathscr{N}(k)$ be the number of hypersurfaces
$\mathcal{X}$-invariant of degree $k$. We have  $$
k\mathscr{N}(k)\leq deg(E(X,V)).
$$
Hence we get
$$
k\mathscr{N}(k)\leq \deg(E(X,V))=k\cdot
h^0(\mathbb{P}(\varpi),\mathscr{O}_{\mathbb{P}(\varpi)}(k))+(d-1)
\cdot{h^0(\mathbb{P}(\varpi),\mathscr{O}_{\mathbb{P}(\varpi)}(k))\choose
2}.
$$
From this we obtain the following  inequality
$$
k[\mathscr{N}(k)-h^0(\mathbb{P}(\varpi),\mathscr{O}_{\mathbb{P}(\varpi)}(k))]\leq(d-1)
\cdot{h^0(\mathbb{P}(\varpi),\mathscr{O}_{\mathbb{P}(\varpi)}(k))\choose
2}.
$$
Now , if we suppose that
$\mathscr{N}(k)-h^0(\mathbb{P}(\varpi),\mathscr{O}_{\mathbb{P}(\varpi)}(k))\geq{h^0(\mathbb{P}(\varpi),\mathscr{O}_{\mathbb{P}(\varpi)}(k))\choose
2}$, we conclude that $k\leq d-1.$
\subsection{Proof of theorem \ref{3}}

Part $i)$: Let $\F$ be a holomorphic foliation on the complex
surface $X$ and $S$ a non-dicritical separatrix . In this conditions
M. Brunella in \cite{B1}\cite{B2} that  $$N_{\F}\cdot S-S\cdot
S\geq0. \ \ \ \ (1)$$

 We have that
$K_{\mathbb{P}(\varpi_0,\varpi_1,\varpi_2)}=T^*_{\F}\otimes
N^*_{\F}$. Since $T^*_{\F}=\mathcal{O}_{\mathbb{P}(\varpi)}(d-1)$
and
$K_{\mathbb{P}(\varpi_0,\varpi_1,\varpi_2)}=\mathcal{O}_{\mathbb{P}(\varpi)}(-\varpi_0-\varpi_1-\varpi_2)$
then
$N_{\F}=\mathcal{O}_{\mathbb{P}(\varpi)}(d+\varpi_0+\varpi_1+\varpi_2-1)$.
Hence
\begin{center}
$ N_{\F}\cdot
S=\dfrac{deg(S)(d+\varpi_0+\varpi_1+\varpi_2-1)}{\varpi_0\varpi_1\varpi_2}$\
\  and $S\cdot S =\dfrac{deg(S)^2}{\varpi_0\varpi_1\varpi_2} $
\end{center}
 By inequality  $(1)$ we get
$$\dfrac{deg(S)^2}{\varpi_0\varpi_1\varpi_2}\leq
\dfrac{deg(S)(d+\varpi_0+\varpi_1+\varpi_2-1)}{\varpi_0\varpi_1\varpi_2},$$
therefore $deg(S)\leq deg(\F)+\varpi_0+\varpi_1+\varpi_2 -1.$
\\
\\
Part $ii):$ If $S$ is quasi-smooth follows from theorem \ref{number}
that
$$
\mathscr{N}(\F)=\sum_{p\in Sin(\F)\cap
S}\mu_p^{orb}(\F)=deg(S)\cdot\frac{deg(\F)+\varpi_0+\varpi_1+\varpi_2-(deg(S)+1)}{\varpi_0\varpi_1\varpi_2}.
$$
On the other hand, we have that $S\cap Sing(\F)\neq\emptyset$. In
fact, follows from  Camacho-Sad index theorem \cite{B1}\cite{B2},
that
$$\sum_{p\in
Sin(\F)\cap S}CS(\F,S,p)=S\cdot S
=\dfrac{deg(S)^2}{\varpi_0\varpi_1\varpi_2} >0.$$ Hence, we can to
conclude that $\mathscr{N}(\F)>0$ and this in turn implies that
$$deg(S)<deg(\F)+\varpi_0+\varpi_1+\varpi_2-1.$$

\subsection{Proof of theorem \ref{number}}

Since $\mathscr{V}$ is quasi-smooth we have the adjunction formula
$\mathcal{N}_{\mathscr{V}}^*\simeq
\mathscr{O}_{\mathbb{P}(\varpi)}(-\deg(\mathscr{V}))_{|_{\mathscr{V}}}$,
see \cite{BGN}. On the other hand, we have the exact sequence of
orbibundle
$$
0\rightarrow T\mathscr{V}\rightarrow
T\mathbb{P}(\varpi)_{|_{\mathscr{V}}}\rightarrow
\mathcal{N}_{\mathscr{V}}\rightarrow 0.
$$
Hence, it follows that $c(T\mathbb{P}(\varpi))=c(T\mathscr{V})\cdot
\mathcal{N}_{\mathscr{V}}=c(T\mathscr{V})\cdot c(
\mathscr{O}_{\mathbb{P}(\varpi)}(\deg(\mathscr{V}))_{|_{\mathscr{V}}})$.
Hence
$$c_i(T\mathscr{V})=c_i(\mathbb{P}(\varpi))-c_{i-1}(T\mathscr{V})c_1(\mathscr{O}_{\mathbb{P}(\varpi)}(\deg(\mathscr{V}))_{|_{\mathscr{V}}}),$$
and inductively we get
$$
c_i(T\mathscr{V})=\left[\sum_{k=0}^{i}(-1)^k\sigma_{i-k}
(\varpi_0,\dots,\varpi_n)deg(\mathscr{V})^{k}\right]c_1(\mathscr{O}_{\mathbb{P}(\varpi)}(1)_{|_{\mathscr{V}}})^i.
$$
Since $\mathscr{V}$ is invariant by $\F$ we have that
$\F_{|_{\mathscr{V}}}$ is induced by a section $\vartheta$ of the
orbibundle
$T\mathscr{V}\otimes\mathscr{O}_{\mathbb{P}(\varpi)}(d-1)_{|_{\mathscr{V}}}$.
As it was done in the proof of the theorem \ref{1} we use the
 intersection theory for to conclude that the Chern class
$$c_{n-1}(T\mathscr{V}\otimes\mathscr{O}_{\mathbb{P}(\varpi)}(d-1)_{|_{\mathscr{V}}})=
\sum_{i=0}^{n-1}c_i(T\mathscr{V})c_1(\mathscr{O}_{\mathbb{P}(\varpi)}(d-1)_{|_{\mathscr{V}}})^{n-1-i}.$$
is the intersection product between the graph of
$\vartheta_{|_{\mathscr{V}}}$ and the graph of the null section, and
each singularity $p\in Sing(\F)$ gives a contribution equal to
$\mu_p^{orb}(\F)$. Thus
$$
\begin{array}{ccl}
  \displaystyle\sum_{p\in Sin(\F)\cap \mathscr{V}
}\mu_p^{orb}(\F)&=&\displaystyle\int\limits_{\mathscr{V}}^{orb}c_{n-1}(T\mathscr{V}\otimes\mathscr{O}_{\mathbb{P}(\varpi)}(d-1)_{|_{\mathscr{V}}})
\\
  &=&\displaystyle\sum_{i=0}^{n-1}\displaystyle\int\limits_{\mathscr{V}}^{orb}c_i(T\mathscr{V})c_1(\mathscr{O}_{\mathbb{P}(\varpi)}(d-1)_{|_{\mathscr{V}}})^{n-1-i} \\
\\
   & = &\dfrac{\displaystyle\sum_{i=0}^{n-1}\left[\sum_{k=0}^{i}(-1)^k\sigma_{i-k}
(\varpi_0,\dots,\varpi_n)deg(\mathscr{V})^{k+1}\right](d-1)^{n-1-i}}{\varpi_0\cdots\varpi_n}.
\end{array}
$$
\\
\\
\textbf{Acknowledgement}:
\\
\\
I would like to be thankful to Marcio G. Soares for  supervising my
work and for interesting conversations.

{\footnotesize
}

\end{document}